\DeclareMathOperator{\Tab}{Tab}
\newtheorem{proposition}{Proposition}
\newtheorem{theorem}{Theorem}
\theoremstyle{definition}
\newtheorem{definition}{Definition}
\theoremstyle{remark}
\newtheorem{remark}{Remark}
\keywords{orthogonal polynomials, moments, permutation tableaux, rook placements, 
          permutations, signed permutations, crossings, Genocchi numbers.}
\subjclass{05A15,05A19,33C45}
\title{The Matrix Ansatz, Orthogonal Polynomials, and Permutations}
\author{Sylvie Corteel, Matthieu Josuat-Verg\`es and Lauren K. Williams}
\address{LIAFA, CNRS and Universit\'e Paris-Diderot, Case 7014, 75205 Paris Cedex 13, France}
\address{LRI, CNRS and universit\'e Paris-Sud, 91405 Orsay Cedex, France}
\address{University of California, Berkeley, CA 94720, USA}
\thanks{The three authors are partially supported by the ANR grant
ANR08-JCJC-0011. The third author is also partially supported
by the NSF grant DMS-0854432 and a Sloan Fellowship.}
\email{corteel@liafa.jussieu.fr, josuat@lri.fr, williams@math.berkeley.edu}
\newcommand{\ket}[1]{\ensuremath{|#1\rangle}}
\newcommand{\bra}[1]{\ensuremath{\langle #1|}}
\newcommand{\braket}[2]{\ensuremath{\langle #1|#2 \rangle}}
\newcommand{\Le}{\hbox{\rotatedown{$\Gamma$}}}
\begin{document}

\begin{abstract}
In this paper we outline a {\it Matrix Ansatz} approach to some problems of
combinatorial enumeration.  The idea is that many interesting quantities
can be expressed in terms of products of matrices, where the 
matrices obey certain relations.  We illustrate this approach 
with applications to moments of 
orthogonal polynomials, permutations, signed permutations, and  tableaux.
%In this paper, we clarify the relationship between moments of orthogonal
%polynomials, the Matrix Ansatz, and enumeration of some tableaux.
%The origin of this connection comes from the partially asymmetric exclusion
%process, a statistical physics model. We illustrate this relationship
%for several $q$-analogues of orthogonal
%polynomials. 
\end{abstract}

\maketitle 

\begin{center}
\emph{To Dennis Stanton with admiration.}
\end{center}

\bigskip

\section{Introduction}

The aim of this article is to explain a {\it Matrix Ansatz} approach 
to some problems of combinatorial enumeration.  The idea is that many 
interesting enumerative quantities can be expressed in terms of 
products of matrices satisfying certain relations.   
In such a situation,
this Matrix Ansatz can be useful for a variety of reasons:
\begin{itemize}
\item having explicit matrix expressions gives rise to explicit formulas
 for the quantities of interest; 
\item finding combinatorial objects which obey the same relations
  gives rise to a combinatorial formula for the quantities of interest;
\item finding two combinatorial solutions to the same set of relations 
identifies the generating functions for the two 
sets of combinatorial objects.
\end{itemize}

This Matrix Ansatz approach is not 
particularly new and has appeared in various contexts,
notably in statistical physics
(see \cite{HakimNadal}, \cite{Klumper}, \cite{Fannes}, and \cite{DEHP93} 
for a few examples). 
We will take this opportunity
to illustrate its utility for some problems in combinatorics.  
%Some of the ideas
%here are related to work of Stanley on differential posets \cite{Stanley}, although nothing
%that follows will rely on having a poset structure.

\subsection{The Matrix Ansatz and the ASEP}
The  inspiration
for this article comes from an important paper of 
Derrida, Evans, Hakim, and Pasquier \cite{DEHP93}, in which the authors
described a Matrix Ansatz approach to the stationary
distribution of the {\it asymmetric exclusion process} (ASEP), a model
from statistical physics.  This model can be described as a Markov
chain on $2^n$ states  -- all words of length $n$ in $0$ and $1$ --
where a $1$ in the $i$th position represents a particle in the $i$th
position of a one-dimensional lattice of $n$ sites.  In this Markov chain,
a new particle may enter the lattice at the left with probability 
$\frac{\alpha}{n+1}$,
a particle may exit the lattice to the right with probability $\frac{\beta}{n+1}$, 
and a particle may hop to an empty site to its right or left with probabilities
$\frac{1}{n+1}$ and $\frac{q}{n+1}$ respectively.  See \cite{DEHP93} for more 
details.  A main result of \cite{DEHP93} was the following.

\begin{theorem} \cite{DEHP93}\label{Ansatz}
Suppose that $D$ and $E$ are matrices and $\bra{W}$ and $\ket{V}$ 
are row and column vectors (not necessarily finite-dimensional),
respectively, such that:
\begin{equation} \label{relED}
    DE=qED+D+E, \qquad \alpha\bra{W} E = \bra{W}, 
 \qquad \beta D\ket{V} = \ket{V}, \qquad \braket{W}{V}=1.
\end{equation}
Then the steady state probability that the ASEP with $n$ sites
is in state $(\tau_1,\dots, \tau_n)$ is equal to 
$$\frac{\bra{W} \prod_{i=1}^n (\tau_i D + (1-\tau_i) E) \ket{V}}{Z_n},$$
where $Z_n = \bra{W} (D+E)^n \ket{V}$.
\end{theorem}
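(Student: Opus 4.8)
The plan is to verify directly that the proposed unnormalized weights are fixed by the dynamics and sum to $Z_n$; dividing by $Z_n$ then produces a probability distribution invariant under the chain, which must be the steady state. Set $A_1=D$ and $A_0=E$, and for a state $\tau=(\tau_1,\dots,\tau_n)$ write $f_n(\tau)=\bra{W}A_{\tau_1}\cdots A_{\tau_n}\ket{V}$. Since $A_0+A_1=D+E$, summing over all $2^n$ states and expanding the product gives $\sum_\tau f_n(\tau)=\bra{W}(D+E)^n\ket{V}=Z_n$, so it is enough to show that $\tau\mapsto f_n(\tau)$ solves the stationarity (master) equation of the chain; the common factor $\tfrac1{n+1}$ in all the rates is irrelevant and may be dropped. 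For a fixed target $\tau$ this equation reads ``total inflow into $\tau$ equals total outflow from $\tau$'', and I would group its terms according to the move responsible: a bulk hop across one of the internal bonds $(i,i+1)$, the injection at site $1$, or the extraction at site $n$.

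The heart of the argument is the bulk part, and this is where the relation $DE=qED+D+E$ enters. Fix an internal bond $(i,i+1)$ and write $L=A_{\tau_1}\cdots A_{\tau_{i-1}}$ and $R=A_{\tau_{i+2}}\cdots A_{\tau_n}$. Tracking which neighbours of $\tau$ flow in and out across this bond (recall a particle hops right at rate $1$ and left at rate $q$), its net contribution to the master equation, inflow minus outflow, is $\bra{W}L(qED-DE)R\ket{V}$ if $(\tau_i,\tau_{i+1})=(1,0)$, is $\bra{W}L(DE-qED)R\ket{V}$ if $(\tau_i,\tau_{i+1})=(0,1)$, and vanishes if $\tau_i=\tau_{i+1}$. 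Using $DE-qED=D+E$ one checks that in all four cases this net contribution equals
\[
  (2\tau_{i+1}-1)\,\bra{W}L\,A_{\tau_i}\,R\ket{V}\;-\;(2\tau_i-1)\,\bra{W}L\,A_{\tau_{i+1}}\,R\ket{V}.
\]
Introducing $P_k=\bra{W}A_{\tau_1}\cdots A_{\tau_{k-1}}A_{\tau_{k+1}}\cdots A_{\tau_n}\ket{V}$ (the product with the $k$-th factor omitted) and $c_k=(2\tau_k-1)P_k$, these two terms are exactly $c_{i+1}$ and $c_i$; hence the bond $(i,i+1)$ contributes $c_{i+1}-c_i$, and summing over $i=1,\dots,n-1$ the bulk part telescopes to $c_n-c_1$.

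It then remains to check that the two boundary moves supply $c_1-c_n$. The injection at site $1$ contributes $\pm\,\alpha\bra{W}E\,A_{\tau_2}\cdots A_{\tau_n}\ket{V}$, with sign $+$ when $\tau_1=1$ (an inflow) and $-$ when $\tau_1=0$ (an outflow); by $\alpha\bra{W}E=\bra{W}$ this equals $(2\tau_1-1)P_1=c_1$. Symmetrically, using $\beta D\ket{V}=\ket{V}$, the extraction at site $n$ contributes $\pm\,\beta\bra{W}A_{\tau_1}\cdots A_{\tau_{n-1}}D\ket{V}=(1-2\tau_n)P_n=-c_n$. Adding bulk and boundary, $(c_n-c_1)+c_1+(-c_n)=0$, so $f_n$ is stationary. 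Since the chain is irreducible it has a unique stationary distribution, and dividing $f_n$ by $Z_n$ (which implicitly uses $Z_n\neq0$ and nonnegativity of the weights, valid for the explicit representation of \cite{DEHP93}) identifies it with the asserted formula; the relation $\braket{W}{V}=1$ is needed only to normalize.

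I expect the one delicate step to be the bulk computation just described: for a \emph{fixed} target state $\tau$, enumerating correctly which of its neighbours flow in and which flow out across each internal bond and with what rates, and then recognizing that $DE-qED=D+E$ is precisely the identity that recasts each bond's net balance as a difference $c_{i+1}-c_i$ of endpoint-type terms (concretely, assigning the scalar $2\tau_j-1\in\{\pm1\}$ to $A_{\tau_j}$ turns $qED-DE$ and $DE-qED$ into $D\cdot(\mp1)\mp1\cdot E$). Once that telescoping is in place the rest is essentially bookkeeping, the two rank-one relations in \eqref{relED} being manifestly designed to absorb the surviving terms $c_1$ and $c_n$. If one wishes to sidestep any convergence issue with possibly infinite-dimensional $D$ and $E$, every manipulation above is an identity in the free algebra generated by $D,E$ modulo the relations \eqref{relED}, evaluated against $\bra{W}$ and $\ket{V}$.
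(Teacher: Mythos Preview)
Your argument is correct and is essentially the classical proof from \cite{DEHP93}: one checks that the proposed weights satisfy the balance equation by grouping terms bond by bond, uses $DE-qED=D+E$ to rewrite each bulk bond's net flow as a telescoping difference of ``one-site-shorter'' quantities, and then cancels the surviving endpoint terms with the boundary relations $\alpha\bra{W}E=\bra{W}$ and $\beta D\ket{V}=\ket{V}$. Note that the present paper does not supply its own proof of this theorem; it is quoted from \cite{DEHP93}, so there is no alternative in-paper argument to compare against.
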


Note that one does not lose any information if one forgets the 
denominator $Z_n$, and simply keeps track of the {\it un-normalized
steady state probability} 
$\bra{W} \prod_{i=1}^n (\tau_i D + (1-\tau_i) E) \ket{V}.$

As an example, given $D, E, \bra{W}, \ket{V}$ satisfying the 
conditions of this Matrix Ansatz, 
the un-normalized steady state probability that the ASEP with $5$ sites
is in state $(1, 0, 0, 1, 1)$ is 
$\bra{W} DEEDD \ket{V}$.

There are several remarks worth noting about Theorem \ref{Ansatz}.
First, the theorem does not guarantee {\it a priori} the existence of such matrices
and vectors, nor does it give a method for finding them.  
Second, one may find several very different solutions
to the Matrix Ansatz.  Third, in this case one  does not need to 
find solutions to (\ref{relED}) in order to compute steady state probabilities:
given any word $X$ in $D$ and $E$, one can keep applying the first relation
of (\ref{relED}) to express $X$ as a sum of words of the form 
$E^k D^{\ell}$, then use  the other relations of (\ref{relED}) to evaluate
this sum.  (That being said, explicit solutions to (\ref{relED}) can be useful
for a variety of reasons, as we will see later.)

As we will explain in this article, there are many other interesting quantities
that can be computed via relations similar to those of (\ref{relED}).  
In a previous article \cite{CoWi07a}, 
the first and third authors explored a combinatorial solution to (\ref{relED}),  
finding an explicit solution $D, E, \bra{W}, \ket{V}$ such that expressions 
of the form $\bra{W} X \ket{V}$ enumerate certain tableaux
of a fixed shape corresponding to $X$.  As a consequence, this 
identified steady state probabilities of the ASEP with generating functions
for  tableaux.  In the remainder of the 
introduction, we will explain this example
in more detail.  Subsequent sections will describe variations of the 
Matrix Ansatz and other combinatorial 
enumeration problems which fit into this framework.

\subsection{The Matrix Ansatz and permutation tableaux}

\begin{definition}
A \emph{permutation tableau}  is a Young diagram 
whose boxes are filled with $0$'s and $1$'s such that:
\begin{itemize}
\item each column contains at least one $1$, and 
\item there is no box containing a $0$ which has both a $1$ above it (in the 
same column) and a $1$ to its left (in the same row). 
\end{itemize}
\end{definition}

We always take the English convention for Young diagrams.
Permutation tableaux are a distinguished subset of Postnikov's $\Le$-diagrams
\cite{Pos06}, introduced in connection to the totally non-negative part of the 
Grassmannian.  More specifically, 
if we drop the first condition above, we recover Postnikov's definition
of $\Le$-diagram.  

Note that we allow our Young diagrams to have rows of length $0$, and we define the \emph{length}
of a permutation
tableau to be the sum of its number of rows and columns.
Alternatively, this is the length of the southeast border of its Young diagram.
The permutation tableaux of length $n$ are in bijection
with permutations on $n$ letters \cite{Pos06, StWi07, Bur07, CoNa09}.
%properties \cite{StWi07,Bur07, CoNa09, Jos08b,Jos09b} 
%and are connected to the partially asymmetric
%exclusion process (PASEP) \cite{CoWi07a,CoWi07b}.
See Figure \ref{exa} for two examples of permutation tableaux
of length 8.

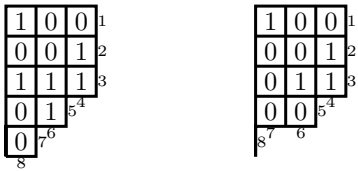
\begin{figure}[h!tp]
\centering
\psset{unit=0.4cm}\psset{linewidth=0.4mm}
\begin{pspicture}(0,0)(3,5)
%\psset{unit=0.5cm}\psset{linewidth=0.4mm}
\psline(0,0)(0,1)(2,1)(2,2)(3,2)(3,5)(0,5)(0,0)
\psline(1,1)(1,5)
\psline(2,2)(2,5)
\psline(0,2)(2,2)
\psline(0,3)(3,3)
\psline(0,4)(3,4)
\psline(0,0)(1,0)(1,1)
\rput(0.5,0.5){$0$}
\rput(0.5,1.5){$0$}
\rput(1.5,1.5){$1$}
\rput(0.5,2.5){$1$}
\rput(1.5,2.5){$1$}
\rput(2.5,2.5){$1$}
\rput(0.5,3.5){$0$}
\rput(1.5,3.5){$0$}
\rput(2.5,3.5){$1$}
\rput(0.5,4.5){$1$}
\rput(1.5,4.5){$0$}
\rput(2.5,4.5){$0$}
\rput(3.2,4.5){\tiny 1}
\rput(3.2,3.5){\tiny 2}
\rput(3.2,2.5){\tiny 3}
\rput(2.5,1.8){\tiny 4}
\rput(2.2,1.5){\tiny 5}
\rput(1.5,0.8){\tiny 6}
\rput(1.2,0.5){\tiny 7}
\rput(0.5,-0.2){\tiny 8}
\end{pspicture}\hspace{2cm}
\begin{pspicture}(0,0)(3,5)
%\psset{unit=0.5cm}\psset{linewidth=0.4mm}
\psline(0,0)(0,1)(2,1)(2,2)(3,2)(3,5)(0,5)(0,0)
\psline(1,1)(1,5)
\psline(2,2)(2,5)
\psline(0,2)(2,2)
\psline(0,3)(3,3)
\psline(0,4)(3,4)
\rput(0.5,1.5){$0$}
\rput(1.5,1.5){$0$}
\rput(0.5,2.5){$0$}
\rput(1.5,2.5){$1$}
\rput(2.5,2.5){$1$}
\rput(0.5,3.5){$0$}
\rput(1.5,3.5){$0$}
\rput(2.5,3.5){$1$}
\rput(0.5,4.5){$1$}
\rput(1.5,4.5){$0$}
\rput(2.5,4.5){$0$}
\rput(3.2,4.5){\tiny 1}
\rput(3.2,3.5){\tiny 2}
\rput(3.2,2.5){\tiny 3}
\rput(2.5,1.8){\tiny 4}
\rput(2.2,1.5){\tiny 5}
\rput(1.5,0.8){\tiny 6}
\rput(0.5,0.8){\tiny 7}
\rput(0.2,0.5){\tiny 8}
\end{pspicture}
\caption{\label{exa} Examples of permutation tableaux}
\end{figure}

Before explaining the connection between the Matrix Ansatz
and permutation tableaux given in \cite{CoWi07a},
we need to define some statistics.
A $0$ in a permutation tableau
is called {\it restricted} if it has a $1$ above it in the same column.
An {\it unrestricted row} is a row that does
not contain any restricted $0$.
A {\it topmost} $1$ is a $1$ which is topmost in its column, and 
a {\it superfluous} $1$ 
is a $1$ that is not topmost. 
If $T$ is a permutation tableau, 
let $wt(T)$ be the number of superfluous $1$'s, let $f(T)$ be the 
number of $1$'s
in the first row, and let $u(T)$ be 
the number of unrestricted rows minus $1$.

Label the southeast border of a permutation tableaux
with $D$'s and $E$'s going from North-East to South-West such that each
South step is labelled by $D$ and each West step is labelled by $E$.
As the first step of the border is always south, we can ignore it,
and encode
the border of a permutation tableau of length $n+1$ with a 
word in the alphabet $\{D,E\}$ of length $n$. The shape of a tableau of length $n+1$
is the corresponding
word in $\{D,E\}^n$. Each corner
of the tableau corresponds to a pattern $DE$ in the word.
For example, the word associated to the diagram at the left of Figure
\ref{decompA} is $DDEDEED$.

If $X$ is a word in $D$ and $E$,
let \[
\Tab(X) =\sum_{T} q^{wt(T)}\alpha^{-f(T)}\beta^{-u(T)},
\]
where the sum is over all permutation tableaux $T$ of shape $X$.

We first claim that for any words $X$ and $Y$ in $D$ and $E$, 
\begin{equation} \label{firstrelation}
\Tab(XDEY) = q \Tab(XEDY) + \Tab(XDY) + \Tab(XEY). 
\end{equation}
To see that this is true, look at the content of a corner box
of a tableau of shape $XDEY$.
It can contain: 
\begin{itemize}
\item a superfluous $1$, in which case the box can be deleted, leaving
a permutation tableau of shape $XEDY$.
\item a topmost $1$, in which case its  column may be deleted,
leaving a permutation tableau of shape $XDY$.   
\item a $0$ (necessarily with only $0$'s to its left), in which case its entire
row may be deleted, leaving a permutation tableau of shape $XEY$.
\end{itemize}
See Figure \ref{decompA}.  In all cases, the monomial associated to the smaller
permutation tableau is easily described in terms of the monomial associated to 
the original tableau, yielding (\ref{firstrelation}).

\begin{figure}[htp]
\centering
\psset{unit=0.4cm}\psset{linewidth=0.4mm}
\begin{pspicture}(0,-1)(3,6)
\psline(0,0)(0,1)(2,1)(2,2)(3,2)(3,5)(0,5)(0,0)
\psline(1,1)(1,5)
\psline(2,2)(2,5)
\psline(0,2)(2,2)
\psline(0,3)(3,3)
\psline(0,4)(3,4)
%\psline(3,5)(3,6)(2,6)(2,7)(1,7)(1,8)(0,8)(0,5)
%\psline(0,6)(2,6)
%\psline(0,7)(1,7)
%\psline(1,5)(1,7)
%\psline(2,5)(2,6)
%\rput(3.2,4.5){\tiny $D$}
\rput(3.2,3.5){\tiny $D$}
\rput(3.2,2.5){\tiny $D$}
\rput(2.5,1.8){\tiny $E$}
\rput(2.2,1.5){\tiny $D$}
\rput(1.5,0.8){\tiny $E$}
\rput(0.5,0.8){\tiny $E$}
\rput(0.2,0.5){\tiny $D$}
\rput(1,-1){\small{$Tab(XDEY)$}}
\end{pspicture}\hspace{.5cm}
\begin{pspicture}(-1,-1)(3,6)
\rput(-1,3){$=$}
\psline(0,0)(0,1)(2,1)(2,2)(3,2)(3,5)(0,5)(0,0)
\psline(1,1)(1,5)
\psline(2,2)(2,5)
\psline(0,2)(2,2)
\psline(0,3)(3,3)
\psline(0,4)(3,4)
%\psline(3,5)(3,6)(2,6)(2,7)(1,7)(1,8)(0,8)(0,5)
%\psline(0,6)(2,6)
%\psline(0,7)(1,7)
%\psline(1,5)(1,7)
%\psline(2,5)(2,6)
%\rput(3.2,4.5){\tiny $D$}
\rput(3.2,3.5){\tiny $D$}
\rput(3.2,2.5){\tiny $D$}
\rput(2.5,1.8){\tiny $E$}
\rput(2.2,1.5){\tiny $D$}
\rput(1.5,0.8){\tiny $E$}
\rput(0.5,0.8){\tiny $E$}
\rput(0.2,0.5){\tiny $D$}
%\rput(2.5,2.5){\red $\blacksquare$}
\rput(2.5,2.5){$1$}
%\rput(2.5,4.5){$1$ }
\rput(1,-1){\small{$=q\Tab(XEDY)$}}
\end{pspicture}
\hspace{.3cm}
\begin{pspicture}(-1,-1)(3,6)
\rput(-1,3){+}
\psline(0,0)(0,1)(2,1)(2,2)(3,2)(3,5)(0,5)(0,0)
\psline(1,1)(1,5)
\psline(2,2)(2,5)
\psline(0,2)(2,2)
\psline(0,3)(3,3)
\psline(0,4)(3,4)
%\psline(3,5)(3,6)(2,6)(2,7)(1,7)(1,8)(0,8)(0,5)
%\psline(0,6)(2,6)
%\psline(0,7)(1,7)
%\psline(1,5)(1,7)
%\psline(2,5)(2,6)
%\rput(3.2,4.5){\tiny $D$}
\rput(3.2,3.5){\tiny $D$}
\rput(3.2,2.5){\tiny $D$}
\rput(2.5,1.8){\tiny $E$}
\rput(2.2,1.5){\tiny $D$}
\rput(1.5,0.8){\tiny $E$}
\rput(0.5,0.8){\tiny $E$}
\rput(0.2,0.5){\tiny $D$}
\rput(2.5,2.5){$1$}
\rput(2.5,3.5){$0$}
\rput(2.5,4.5){$0$}
%\rput(2.5,5.5){\red $\blacksquare$}
%\rput(1.5,5.5){\red $\blacksquare$}
%\rput(0.5,5.5){\red $\blacksquare$}
\rput(1,-1){\small{$+\Tab(XDY)$}}
\end{pspicture}
\hspace{.3cm}
\begin{pspicture}(-1,-1)(3,6)
\rput(-1,3){+}
\psline(0,0)(0,1)(2,1)(2,2)(3,2)(3,5)(0,5)(0,0)
\psline(1,1)(1,5)
\psline(2,2)(2,5)
\psline(0,2)(2,2)
\psline(0,3)(3,3)
\psline(0,4)(3,4)
%\psline(3,5)(3,6)(2,6)(2,7)(1,7)(1,8)(0,8)(0,5)
%\psline(0,6)(2,6)
%\psline(0,7)(1,7)
%\psline(1,5)(1,7)
%\psline(2,5)(2,6)
%\rput(3.2,4.5){\tiny $D$}
\rput(3.2,3.5){\tiny $D$}
\rput(3.2,2.5){\tiny $D$}
\rput(2.5,1.8){\tiny $E$}
\rput(2.2,1.5){\tiny $D$}
\rput(1.5,0.8){\tiny $E$}
\rput(0.5,0.8){\tiny $E$}
\rput(0.2,0.5){\tiny $D$}
\rput(2.5,2.5){$0$}
\rput(1.5,2.5){$0$}
\rput(0.5,2.5){$0$}
\rput(1,-1){\small{$+\Tab(XEY)$}}
\end{pspicture}
\caption{\label{decompA} Decomposition of a permutation tableau}
\end{figure}

We also claim that for any word $X$ in $D$ and $E$,
\begin{equation} \label{secondthirdrelation}
\alpha \Tab(EX) = \Tab(X) \text{ and } \beta \Tab(XD) = \Tab(X).
\end{equation}

These relations are clear upon inspection.  Comparing 
equations (\ref{firstrelation}) and (\ref{secondthirdrelation}) to 
the equations of the Matrix Ansatz (\ref{Ansatz}), we see that 
the generating functions $\Tab(X)$ for permutation tableaux of 
shape $X$ obey the same recursions as do the un-normalized
steady state probabilities of 
the ASEP. Hence for each word $X$ of length $n$ in $D$ and $E$,
$\Tab(X)$ computes the (un-normalized) steady state probability
of being in the corresponding state of the ASEP with $n$ sites.

%To interpret the rest of the Matrix Ansatz, we look
%at permutation tableaux whose first letter is an $E$. 
%This corresponds to a rightmost
%column of length one which must contain a $1$.
%This column can be deleted: $\alpha\bra{W}E=\bra{W}$.
%Now for permutation tableaux whose last letter is a $D$,
%the diagram ends with an empty row. Again it 
%can be deleted and we get:
%$\beta D\ket{V} = \ket{V}$. Finally there exists one
%permutation tableau of length zero: $\braket WV=1$.

Alternatively, one could concretely define the following matrices
and vectors, with rows and columns indexed by the non-negative integers.  Let
$D$  be the (infinite) upper triangular
matrix $(D_{i,j})_{i,j \geq 0}$ such that
$$D_{i,i+1}=\beta^{-1},\ \qquad D_{i,j}=0\qquad {\rm otherwise}.
$$
Let $E$  be the (infinite) lower triangular matrix
$(E_{ij})_{i,j\geq 0}$ such that for $j \leq i$,
$$E_{ij} = \beta^{i-j} \big(\alpha^{-1} q^{j} \binom ij +
\sum_{r=0}^{j-1} {i-j+r \choose r} q^r\big).$$
Otherwise, $E_{ij} = 0$.
Also let 
$\bra{W}$ be the (row) vector $(1,0,0,\dots )$ and $\ket{V}$ be the (column)
vector
$(1,1,1,\dots)^T$.
Then one can check that these matrices and vectors satisfy (\ref{Ansatz}), 
and 
%also 
hence that $\bra{W} X \ket{V}$ is equal to $\Tab(X)$.  More specifically,
one can think of $D$ and $E$ as operators acting on the infinite-dimensional
vector space indexed by all permutation tableaux (of all shapes and lengths),
where $D$ acts by adding a new row of length $0$ and $E$ acts by adding
a new column at the left of an existing tableau, in all possible ways.
See \cite{CoWi07a} for details.

The upshot of both arguments is that the steady state probability that the ASEP
is in state $\tau$ is proportional to the generating function of permutation 
tableaux of shape $\prod_{i=1}^n (\tau_iD+(1-\tau_i)E$.  
Additionally, the second
argument provides an explicit formula for this generating function, as 
a matrix product.

We remark that there is a more general version of the ASEP in which
particles enter and exit at both sides of the lattice, with probabilities
$\alpha, \beta, \gamma,$ and $\delta$, and there is also a tableaux
formula for the stationary distribution of this more general model
\cite{CoWi10}.  However, this formula is considerably harder
to prove than the special case described above;
the methods described here are not sufficient.

\subsection{The Matrix Ansatz and Motzkin paths}

Another solution to the Matrix Ansatz of Theorem \ref{Ansatz} was given 
in \cite{BCEPR,BECE}, and is as follows.
Let $\tilde\alpha=(1-q)\frac 1\alpha-1$ and $\tilde\beta=(1-q)\frac 1\beta-1$.  Define 
vectors $\bra{W}=(1,0,0,\dots)$ and $\ket{V} = (1,0,0,\dots)^T$, and
matrices $D=(D_{i,j})_{i,j\geq0}$
and $E=(E_{i,j})_{i,j\geq0}$
where all entries are equal to $0$ except
\begin{equation}
(1-q)D_{i,i}=1-\tilde\beta q^i,\qquad
(1-q)D_{i,i+1}= 1-\tilde\alpha\tilde\beta q^i,\qquad
(1-q)E_{i,i}=1-\tilde\alpha q^i,\qquad (1-q)E_{i+1,i}= 1-  q^{i+1}.
\label{useafter}
\end{equation}

Recall that a {\it Motzkin path} is a lattice path in the plane 
with steps east $(1,0)$, northeast $(1,1)$, and southeast $(1,1)$,
which starts at the origin, 
always stays at or above
the $x$-axis, and 
ends at the $x$-axis. 
One often puts weights on the steps of a Motzkin path
and then considers the generating function for all Motzkin paths of 
a fixed length.

Note that if $\bra{W}=(1,0,\ldots )$, $\ket{V}
=(1,0,\ldots)^T$,  and $D$ and $E$ are tridiagonal matrices (that is,
$D_{i,j}=E_{i,j}=0$ for $|i-j|>1$), then 
for any word $X$ in $D$ and $E$, the quantity 
$\bra{W}X\ket{V}$ is the generating function for all
weighted Motzkin paths of length $n$ and {\it shape} $X=X_1\ldots X_n$. 
By {\it shape} $X$, we mean that if $X_j=D$ then the $j$th step of 
the Motzkin path must be a northeast or east step, and if $X_j=E$, then
the $j$th step of the Motzkin path must be a southeast or east step.
Furthermore, if the $j$th step of the Motzkin path
starts at height $i$, its weight is:
\begin{itemize}
\item $D_{i,i+1}$ if the step is northeast and $X_j=D$
\item $E_{i,i+1}$ if the step is northeast and $X_j=E$
\item $D_{i,i}$ if the step is east and $X_j=D$
\item $E_{i,i}$ if the step is east and $X_j=E$
\item $D_{i,i-1}$ if the step is southeast and $X_j=D$
\item $E_{i,i-1}$ if the step is southeast and $X_j=E$
\end{itemize}

Moreover the quantity $Z_n = \bra{W} (D+E)^n \ket{V}$ is the generating function
for all weighted Motzkin paths of length $n$ where the weight of
a northeast (resp. east, southeast) step starting at height 
$i$ is 
$D_{i,i+1}+E_{i,i+1}$, (resp. $D_{i,i}+E_{i,i}$, $D_{i,i-1}+E_{i,i-1}$).

It follows that the above solution $D, E, \bra{W}, \ket{V}$
to the Matrix Ansatz identifies steady state probabilities of 
the ASEP with generating functions for weighted Motzkin paths
of the corresponding shape.  

Additionally, these Motzkin paths provide a link to orthogonal polynomials,
via the combinatorial theory of orthogonal polynomials provided by 
\cite{Fla82,Vie88}.  More specifically, 
the moments of the weight function
of a family of monic orthogonal polynomials can be identified with 
the generating functions for weighted Motzkin paths,
where the weights on the steps of the 
Motzkin paths come directly from the coefficients of the 
three-term recurrence for the orthogonal polynomials.  This will be explained
more carefully in the following section.  
This allows us to interpret the partition
function $Z_n=\bra{W} (D+E)^n \ket{V}$  of the ASEP with $n$ sites as the 
$n$th moment of a family of orthogonal polynomials.
Indeed, the solution of the Matrix Ansatz given here corresponds to the
moments of the Al-Salam-Chihara polynomials. The link
between the ASEP and orthogonal polynomials was first made in \cite{Sa99}.
Most of the time,  when we define orthogonal
polynomials, we will refer to  the survey paper
\cite{KoSw98}. All the original articles are cited in \cite{KoSw98}.

%The purpose of this paper is to explain variations of the matrix
%Ansatz and their applications to enumerative problems involving
%tableaux, Motzkin paths, and orthogonal polynomials.
%\marginpar{probably need to rewrite this paragraph}
%In the sections which follow we will recall classical
%results on $q$-Hermite, $q$-Charlier and $q$-Laguerre. Then we will
%see how the moments of two different kinds of 
%$q$-Laguerre polynomials naturally arise from permutation
%tableaux of type B defined in \cite{LaWi08}. Finally, we will
%show how permutation tableaux of staircase shape are related to the Hahn 
%polynomials.

This article is organized as follows. In Section \ref{sec:orth}, we recall some 
elementary facts concerning moments of orthogonal polynomials, and explain how
the Matrix Ansatz sheds new light on some known examples, {\it e.g.}
the connection between rook placements and Hermite polynomials.
 In Section \ref{sec:sign},
we apply this method to the enumeration of permutations,
signed permutations, and type B permutation tableaux 
\cite{LaWi08}, and we make a link with two different kinds of $q$-Laguerre polynomials.
Finally, in Section \ref{sec:gen}, we show that the Matrix Ansatz gives rise to a
new combinatorial interpretation of a generalization of Genocchi numbers
defined by Dumont and Foata \cite{DuFo76}.

\textsc{Acknowledgements:}
The third author is grateful to Ira Gessel for a useful conversation 
about orthogonal polynomials.

\section{Orthogonal polynomials}
\label{sec:orth}

In this section we review some notions about orthogonal polynomials, in particular
a combinatorial interpretation of their moments.
We then revisit some classical results about orthogonal polynomials,
illustrating the Matrix Ansatz approach.
Note that throughout this section,
we take $\bra{W}=(1,0,0,\dots)$ and $\ket{V}=\bra{W}^T$.

\begin{definition}
We say $\{P_k(x)\}_{k \geq 0}$ is a family of \emph{orthogonal polynomials} if 
there exists a linear functional $f: K[x]\to K$ such that:
\begin{itemize}
\item $\deg(P_k) = k$ for $k \geq 0$,
\item $f(P_k P_{\ell}) = 0$ if $k \neq \ell$,
\item $f(P_k^2) \neq 0$ for $k \geq 0$.
\end{itemize}
The \emph{$n$th moment} of $\{P_k(x)\}_k$  is defined 
to be $\mu_n=f(x^n)$.
\end{definition}

It might seem that $\mu_n$ depends on $f$ and not just on $\{P_k(x)\}_{k \geq 0}$. But since
$f(P_n)=0$ for any $n\geq1$, we can obtain the moments recursively from the initial
moment $\mu_0$.  By convention we always take $\mu_0=1$.

By Favard's Theorem, monic orthogonal polynomials satisfy a three-term recurrence.
\begin{theorem}
Let $\{P_k(x)\}_{k \geq 0}$ be a family of monic orthogonal polynomials.  Then 
there exist coefficients $\{b_k\}_{k \geq 0}$ and $\{\lambda_k\}_{k \geq 1}$
in $K$ such that 
$P_{k+1}(x) = (x-b_k) P_k(x) - \lambda_k P_{k-1}(x)$.
\end{theorem}

By work of \cite{Fla82,Vie88}, the $n$th moment of a family
of monic orthogonal polynomials can be identified with the 
generating function for weighted Motzkin paths as follows.

\begin{theorem}\cite{Fla82, Vie88}\label{moment-motzkin}
Consider a family of monic orthogonal polynomials $\{P_k(x)\}_{k \geq 0}$ 
which satisfy the three-term recurrence
$P_{k+1}(x) = (x-b_k) P_k(x) - \lambda_k P_{k-1}(x)$, for 
$\{b_k\}_{k \geq 0}$ and $\{\lambda_k\}_{k \geq 1}$
in $K$.  Then the $n$th moment $\mu_n$ of $\{P_k(x)\}_{k \geq 0}$ is equal to 
$\bra{W} M^n \ket{V}$, where 
$M=(m_{ij})_{i,j\geq 0}$ is the tridiagonal matrix with
rows and columns indexed by the non-negative integers, such that
$m_{i,i-1}=\lambda_i,$ $m_{ii}=b_i$, and $m_{i,i+1}=1$.
Equivalently, $\mu_n$ is equal to the generating
function for weighted Motzkin paths of length $n$, where the 
northeast steps have weight $1$,
the east steps at height $i$ have weight $b_i$,
and the southeast steps starting at height $i$ have weight $\lambda_i$.
\end{theorem}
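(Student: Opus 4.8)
The plan is to prove the two displayed formulations are equivalent first (this part is purely formal), and then to establish that $\mu_n = \bra{W} M^n \ket{V}$ by writing down a vector recursion that is driven by the three-term recurrence. For the equivalence, expand the matrix product: since $\bra{W} = (1,0,0,\dots)$ and $\ket{V} = \bra{W}^T$, we get $\bra{W} M^n \ket{V} = (M^n)_{0,0} = \sum M_{i_0,i_1} M_{i_1,i_2} \cdots M_{i_{n-1},i_n}$, the sum over all sequences $0 = i_0, i_1, \dots, i_n = 0$ of non-negative integers. Because $M$ is tridiagonal, consecutive indices differ by at most $1$, so such a sequence is exactly the height sequence of a length-$n$ lattice path with northeast/east/southeast steps that stays weakly above the $x$-axis (forced by $i_j \geq 0$) and returns to it, i.e.\ a Motzkin path; the product of the matrix entries along the path is precisely the product of the stated step weights ($m_{i,i+1}=1$ for a northeast step, $m_{i,i}=b_i$ for an east step at height $i$, $m_{i,i-1}=\lambda_i$ for a southeast step from height $i$). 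This is a standard transfer-matrix argument and needs no real work.

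For the substantive part, let $P_0 = 1, P_1, P_2, \dots$ be the monic orthogonal polynomials, which form a $K$-basis of $K[x]$, and set $v_n(k) = f(x^n P_k(x))$ for $k,n \geq 0$, with $v_n = (v_n(0), v_n(1), \dots)^T$. Two elementary observations: (i) since $x^n$ has degree $n$, its expansion in the $P$-basis involves only $P_0,\dots,P_n$, so orthogonality gives $v_n(k) = 0$ for $k > n$ and $v_n$ has finite support; (ii) $v_0(k) = f(P_k) = \delta_{k,0}$, because $f(1) = \mu_0 = 1$ by convention and $f(P_k) = 0$ for $k \geq 1$, so $v_0 = \ket{V}$. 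Now multiply the recurrence $x P_k = P_{k+1} + b_k P_k + \lambda_k P_{k-1}$ (with $P_{-1} := 0$, so the case $k=0$ reads $xP_0 = P_1 + b_0 P_0$) by $x^n$ and apply $f$: this gives $v_{n+1}(k) = v_n(k+1) + b_k v_n(k) + \lambda_k v_n(k-1)$, which is exactly $v_{n+1} = M v_n$ for the matrix $M$ of the statement. Hence $v_n = M^n \ket{V}$, and since $\mu_n = f(x^n) = f(x^n P_0) = v_n(0) = \bra{W} v_n$, we conclude $\mu_n = \bra{W} M^n \ket{V}$, which together with the first paragraph completes the proof.

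There is essentially no hard step here; the only things to get right are bookkeeping. One must check that "multiply the recurrence by $x^n$ and apply $f$" produces multiplication by $M$ on column vectors rather than by its transpose, and one must use the monic normalization to know that the coefficient of $P_{k+1}$ in $xP_k$ is exactly $1$, matching $m_{i,i+1}=1$. It is also worth recalling explicitly that the existence of the coefficients $\{b_k\}$ and $\{\lambda_k\}$ is guaranteed by Favard's theorem (quoted just above), and that the normalization $\mu_0 = 1$ is exactly what forces the base case $v_0 = \ket{V}$; without it one would only get $\mu_n$ up to the overall scalar $\mu_0$.
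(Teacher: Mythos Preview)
Your proof is correct and takes essentially the same approach as the paper: both rewrite the three-term recurrence as $xP_k = P_{k+1} + b_k P_k + \lambda_k P_{k-1}$, interpret this as $MP = xP$ for the column vector $P = (P_0,P_1,\dots)^T$, iterate to $M^nP = x^nP$, apply the linear functional $f$ componentwise, and read off the first entry using $f(P_0)=1$ and $f(x^nP_0)=\mu_n$. The paper phrases this in one stroke with the eigenvector equation, whereas you unroll it as the recursion $v_{n+1} = M v_n$ on the numerical vectors $v_n(k)=f(x^nP_k)$, but the content is identical; your extra remarks on finite support and the transfer-matrix reading of $(M^n)_{0,0}$ as Motzkin paths are careful touches the paper leaves implicit.
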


The third author would like to thank Ira Gessel for explaining the following
simple proof to her.

\begin{proof}
Let us re-write the three-term recurrence
as $xP_k(x) = P_{k+1}(x) +b_k P_k(x) + \lambda_k P_{k-1}(x)$. 
Let $P=(P_0(x), P_1(x), P_2(x), \dots )^T$ be the vector whose $i$th component
is $P_i(x)$.
%=(m_{ij})_{i,j\geq 0}$ be the tridiagonal matrix with
%rows and columns indexed by the non-negative integers,
%with $m_{i,i-1}=\lambda_i,$ $m_{ii}=b_i$, and $m_{i,i+1}=1$.
Then the fact that the $P_k(x)$ satisfy the three-term recurrence is equivalent
to the statement that $MP = xP$.

It follows that $M^n P = x^n P$. Let $f$ be the linear functional
associated to $\{P_k(x)\}$.  Applying $f$
 to both sides gives 
\begin{equation} \label{equality}
M^n f(P) = f(x^n P).
\end{equation}
We now analyze the first entry of the vector on either side of 
(\ref{equality}).  
Note that $P_0=1$ so $f(x^n P_0)=f(x^n)=\mu_n$.
And $f(P_0)=1$, so the first entry of the left-hand-side of 
(\ref{equality}) is equal to $\bra{W} M^n \ket{V}$.
It is easy to see that this is equal to the generating function for Motzkin paths
of length $n$, where the northeast steps are weighted $1$,
the east steps at height $i$ are weighted $b_i$,
and the southeast steps starting at height $i$ are weighted $\lambda_i$.
This completes the proof.
\end{proof}

The following remarks will be useful in subsequent sections.
\begin{remark}
Given a tridiagonal matrix $M$, note that if we define
another tridiagonal matrix $M'=(m'_{ij})$ such that 
$m'_{ii} = m_{ii}$ and $m'_{i, i+1} m'_{i+1, i} = m_{i, i+1} m_{i+1, i}$,
then $\bra{W} M^n \ket{V} = \bra{W} M'^n \ket{V}.$
\end{remark}

\begin{remark} \label{shift}
If $\{P_k(x)\}_{k\geq0}$ is an orthogonal sequence with moments $\{\mu_n\}_{n\geq0}$, 
the shifted sequence $\{P_k(ax+b)\}_{k\geq0}$ with $a,b\in K$ and $a\neq 0$ is also
orthogonal, and its $n$th moment is $\frac{1}{a^n}\sum_{k=0}^n \binom n k \mu_k
(-b)^{n-k} $. In particular, if $\mu_n$ has the form $\mu_n=\bra{W}M^n\ket{V}$, then
the moments of the shifted sequence are $\bra{W}M'^n\ket{V}$ where $M'=\frac1a(M-bI)$.
\end{remark}

\subsection{Rook placements and $q$-Hermite polynomials}
%\subsection{Involutions and Hermite polynomials}
Let $[n]_q=1+q+\ldots +q^{n-1}$. One of the simplest classes of orthogonal
polynomials are the ``continuous big $q$-Hermite polynomials'', defined by the recurrence
\begin{equation}
 x H_n(x|q) =  H_{n+1}(x|q) + aq^nH_n(x|q) + [n]_q H_{n-1}(x|q).
\end{equation}
When $a=0$ they specialize to the ``continuous $q$-Hermite polynomials'', and in this case 
a combinatorial interpretation of the moments was given in \cite{ISV87} in terms of perfect 
matchings and crossings. From Theorem \ref{moment-motzkin},
the moments are $\mu^h_n=\bra{W}M^n\ket{V}$, where the matrix $M$ has coefficients
\begin{equation}
 m_{i+1,i}=1, \qquad m_{i,i}=aq^i, \qquad m_{i,i+1}=[i]_q, \qquad m_{i,j} = 0\quad \hbox{if } |i-j|>1.
\end{equation}
We can write $M=F+U$ where $F$ is strictly upper triangular with $0$'s on 
the diagonal, and $U$ is lower 
triangular. We can check that 
%\marginpar{Can this example be generalized?  eg to the big $q$-Hermite? or Al-Salam-Chihara?}
\begin{equation}\label{FUAnsatz}
    FU-qUF = I, \qquad \bra{W} U = a\bra{W}, \qquad F\ket{V} = 0.
\end{equation}
So we have a ``Matrix Ansatz" for the moments $\mu^h_n$. 
Indeed, just as permutation tableaux are described by the Matrix Ansatz of 
(\ref{relED}), we can search for tableaux which are enumerated by products
$\bra{W} X \ket{V}$, where $X$ is a word in $F$ and $U$, and $F$ and $U$ satisfy
the relations of (\ref{FUAnsatz}).  It's easy to see that 
$\bra{W} X \ket{V}$ enumerates rook placements in Young diagrams of ``shape" $X$,
where 
\begin{itemize}
\item there is exactly one rook per row,
\item $a$ counts the columns without rook,
\item $q$ counts each cell with no rook to its right in the same row, nor below it in the
same column.
\end{itemize}
And then since the moments $\mu^h_n$ of the continuous big $q$-Hermite polynomials are given by
$\bra{W} (F+U)^n \ket{V}$, we recover results of \cite{Var05}: 
$\mu^h_n$ enumerates such rook placements in all Young diagrams
where the number of rows plus the number of columns is $n$.

There is a simple bijection between these rook placements and involutions on $[n]$.  
To obtain the involution from
the rook placement, we label the steps in the southeast border of the Young diagram, and 
draw some arches as follows: for each rook lying in 
column $i$ and row $j$,
we draw an arch joining $i$ and $j$. 
See Figure \ref{fig_inv}; the rooks are represented by 
$\circ$.  Under this bijection, $a$ 
counts the fixed points, and $q$ keeps track of  a statistic called {\it total crossing number}
defined in \cite{MSS07}.
This extends the case $a=0$ given by Ismail, Stanton, and Viennot \cite{ISV87}.

We can also consider the Motzkin paths corresponding to 
the product $\mu^h_n=\bra{W}(F+U)^n\ket{V}$. There is a simple bijection between these
paths and involutions. This was given in \cite{Pen95} in the case $a=0$, and in the present case
with $a$ general, Penaud's construction can be adapted so that horizontal steps in the Motzkin
paths correspond to fixed points in the involution. See Figure \ref{fig_inv} for a rook placement,
the corresponding involution, and the corresponding Motzkin path. 
The weighted Motzkin path is obtained 
from the involution as follows: the $i$th step is $\nearrow$ if $i$ is the smallest element of an
arch, it is $\rightarrow$ with weight $aq^k$ if $i$ is a fixed point with $k$ arches above it, 
and it is a step $\searrow$ with weight $q^k$ if $i$ is the largest element of an arch $(j,i)$ and 
there are $k$ arches $(u,v)$ such that $j<u<i<v$.

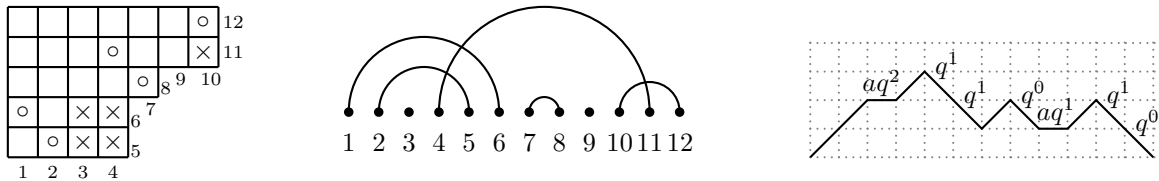
\begin{figure}[h!tp]  \center \psset{unit=4mm}
\begin{pspicture}(1,0)(7,5)
\psline(0,5)(0,0)\psline(0,5)(7,5)
\psline(1,5)(1,0)\psline(0,4)(7,4)
\psline(2,5)(2,0)\psline(0,3)(7,3)
\psline(3,5)(3,0)\psline(0,2)(5,2)
\psline(4,5)(4,0)\psline(0,1)(4,1)
\psline(5,5)(5,2)\psline(0,0)(4,0)
\psline(6,5)(6,3)
\psline(7,5)(7,3)
\rput(0.5,1.5){$\circ$}\rput(1.5,0.5){$\circ$}\rput(3.5,3.5){$\circ$}
\rput(4.5,2.5){$\circ$}\rput(6.5,4.5){$\circ$}\rput(2.5,1.5){$\times$}
\rput(2.5,0.5){$\times$}\rput(3.5,1.5){$\times$}\rput(3.5,0.5){$\times$}
\rput(6.5,3.5){$\times$}
  \rput(0.5,-0.5){\scriptsize 1}
  \rput(1.5,-0.5){\scriptsize 2}
  \rput(2.5,-0.5){\scriptsize 3}
  \rput(3.5,-0.5){\scriptsize 4}
  \rput(4.25,0.2){\scriptsize 5}
  \rput(4.25,1.2){\scriptsize 6}
  \rput(4.75,1.6){\scriptsize 7}
  \rput(5.25,2.3){\scriptsize 8}
  \rput(5.75,2.6){\scriptsize 9}
  \rput(6.75,2.6){\scriptsize 10}
  \rput(7.5,3.5){\scriptsize 11}
  \rput(7.5,4.5){\scriptsize 12}  
\end{pspicture}
\hspace{1.5cm} \psset{unit=4mm}
\begin{pspicture}(1,-1.5)(12,3)
\psdots(1,0)(2,0)(3,0)(4,0)(5,0)(6,0)(7,0)(8,0)(9,0)(10,0)(11,0)(12,0)
  \rput(1,-1){1}
  \rput(2,-1){2}
  \rput(3,-1){3}
  \rput(4,-1){4}
  \rput(5,-1){5}
  \rput(6,-1){6}
  \rput(7,-1){7}
  \rput(8,-1){8}
  \rput(9,-1){9}
  \rput(10,-1){10}
  \rput(11,-1){11}
  \rput(12,-1){12}
\psarc(3.5,0){2.5}{0}{180}\psarc(3.5,0){1.5}{0}{180}\psarc(7.5,0){3.5}{0}{180}\psarc(7.5,0){0.5}{0}{180}
\psarc(11,0){1}{0}{180}
%\psdots(3.15,1)(3.75,1.9)(8.85,1)
\end{pspicture}
\hspace{1.5cm} \psset{unit=3.8mm}
\begin{pspicture}(10,4)
\psgrid[gridcolor=gray,griddots=4,subgriddiv=0,gridlabels=0](0,0)(12,4)
\psline(0,0)(1,1)(2,2)(3,2)(4,3)(5,2)(6,1)(7,2)(8,1)(9,1)(10,2)(11,1)(12,0)
\rput(2.5,2.6) {$aq^2$}
\rput(4.8,3.2) {$q^1$}
\rput(5.8,2.2) {$q^1$}
\rput(7.8,2.2) {$q^0$}
\rput(8.6,1.6) {$aq^1$}
\rput(10.8,2.2) {$q^1$}
\rput(11.8,1.2){$q^0$}
\end{pspicture}
\caption{ \label{fig_inv}
A rook placement, the corresponding involution and weighted Motzkin path.
}
\end{figure}

\subsection{$0-1$ Tableaux and a $q$-analogue of Charlier polynomials}
%\subsection{$q$-Stirling numbers}

The Stirling numbers appear as moments of Charlier polynomials. We consider here the $q$-Charlier
polynomials defined by de M\'edicis, Stanton, and White in \cite{MSW95} as a rescaled version of 
Al-Salam-Carlitz polynomials, with the recurrence relation
\begin{equation}
      xC_n(x, a; q) = C_{n+1}(x, a; q) 
        +  ( aq^n + \left[n\right]_q ) C_n (x, a; q) + a\left[n\right]_q q^{n-1} C_{n-1} (x, a; q).
\end{equation}
From the same reference we know that the $n$th moment of this sequence is $\sum_{k=1}^na^kS_q(n,k)$
where $S_q(n,k)$ is the Carlitz $q$-Stirling number, satisfying the relation
\begin{equation}
   S_q(n,k) = S_q(n-1,k-1) + [k]_q S_q(n-1,k).
\end{equation}
A combinatorial interpretation of $S_q(n,k)$ was given by Leroux in \cite{Ler90}. A {\it $0-1$  tableau} is
a filling of a Young diagram with $0$'s and $1$'s 
such that there is exactly one
 $1$ in each column. Then 
$S_q(n,k)$ is the generating function for $0-1$ 
tableaux with $n-k$ columns and $k$ rows, where $q$ counts
the number of $0$'s above a $1$.

Let us explain how we can reproduce Leroux's combinatorial interpretation
via the Matrix Ansatz method.
From the recurrence relation, the moments of the 
$q$-Charlier polynomials are $\mu^c_n=\bra{W}M^n\ket{V}$ 
where the tridiagonal matrix $M$ has coefficients
\begin{equation}
m_{i+1,i} = [i+1]_q, \qquad m_{i,i} = aq^i + [i]_q, \qquad m_{i,i+1} = aq^i, \qquad m_{i,j} = 0\quad \hbox{if } |i-j|>1.
\end{equation}
We can write $M=aX+Y$ where $X$ and $Y$ do not depend on $a$. These matrices
satisfy the relations
\begin{equation} \label{XY}
   XY-qYX = X, \qquad  \bra{W} Y = 0,  \qquad  X \ket{V} = \ket{V}.
\end{equation}
The Matrix Ansatz method shows that with these relations, $\bra{W}(aX+Y)^n\ket{V}$ is 
a generating function of $0-1$ tableaux. Indeed there is a recursive decomposition of $0-1$ tableaux  
associated with the relation (\ref{XY}), in the same way that permutation tableaux are associated
with the relations (\ref{relED}). So the method explains why these tableaux appear as moments of 
the $q$-Charlier polynomials.

\subsection{Rook placements and another $q$-analogue of Charlier polynomials}
%Another set of $q$-Stirling numbers}
%\subsection{Another set of $q$-Stirling numbers}

Another $q$-analogue of Charlier polynomials was defined by Kim, Stanton, and Zeng in \cite{KSZ06}
as a rescaled version of Al-Salam-Chihara polynomials. The recurrence is:
\begin{equation} \label{qchar_rec}
  x C^*_{n}(x,a; q) = C^*_{n+1}(x,a; q) +
     (a+\left[n\right]_q)  C^*_{n}(x,a; q)
     + a \left[n\right]_q  C^*_{n-1}(x,a; q).
\end{equation}
The $n$th moment is $\mu_n^{c^*}=\bra{W}M\ket{V}$, where $M$ has coefficients:
\begin{equation} \label{def_Mqchar}
   m_{i+1,i} = a, \qquad m_{i,i} = a + [i]_q, \qquad m_{i,i+1}  = [i+1]_q, \qquad m_{i,j} = 0\quad \hbox{if } |i-j|>1.
\end{equation}
Let $F=(f_{i,j})_{i,j\geq0}$ and $U=(u_{i,j})_{i,j\geq0}$ be such that $f_{i,i+1}=[i+1]_q$ and 
$u_{i+1,i}=1$, all other coefficients being 0.
We can check that  $M=(U+I)(F+aI)$ where $F$ and $U$ are the matrices defined above (for the 
$q$-Hermite polynomials ) satisfying $FU-qUF=I$. From $\mu_n^{c^*}=\bra{W}((U+I)(F+aI))^n\ket{V}$, 
the Matrix Ansatz method gives a combinatorial interpretation of the moments in terms of rook placements. 
Indeed, in the expansion of $((U+I)(F+aI))^n$, each term $m$ corresponds to the choice of some columns
and rows in the staircase Young diagram, and $\bra{W}m\ket{V}$ counts the rook placements with no free
row or column of shape $m$. So we can see that $\mu_n^{c^*}$ counts the rook placements in the staircase
Young diagram of length $2n$, possibly with empty rows and columns, and $q$ counts the number
of inversions, {\it i.e.} cells having a rook below and a rook to its left. 

A simple bijection links
these rook placements with set partitions so that $q$ counts the number of crossings and $a$
counts the number of blocks.  This result was first given in \cite{KSZ06}. Motzkin paths in this 
case are known as {\it Charlier diagrams} \cite{KaZe06}. See Figure \ref{rook_stair} for an example of
a rook placement, the corresponding set partition $\pi = (1,5,7)(2,6)(3,4)$, and the Charlier diagram. 
%There are two inversions in the rook placement, 
%two crossings in the permutation, and the Charlier diagram has weight $q^2$.
The set partition is obtained from the rook placement as follows: we label the inner corner of the 
Young diagrams with integers, and for any rook at the intersection of a column with label $i$ and
a row with label $j$ we draw an arch from $i$ to $j$. Each block of the set partition is given by
a sequence of chained arches. The weighted Motzkin path is obtained from the set partition in the 
following way. The $i$th step is $\nearrow$ with weight $a$ if $i$ is the
minimum element of a non-singleton block, it is $\rightarrow$ with weight $a$ if $i$ is a singleton
block, it is $\rightarrow$ with weight $q^k$ if $i$ is the minimal element of an arch and the maximal 
element of another arch $(j,i)$, and it is $\searrow$ with weight $q^k$ if $i$ is the maximal element
of an arch $(j,i)$ but not the minimal element of another arch. Here $k$ is the number of arches $(u,v)$ 
such that $j<u<i<v$.

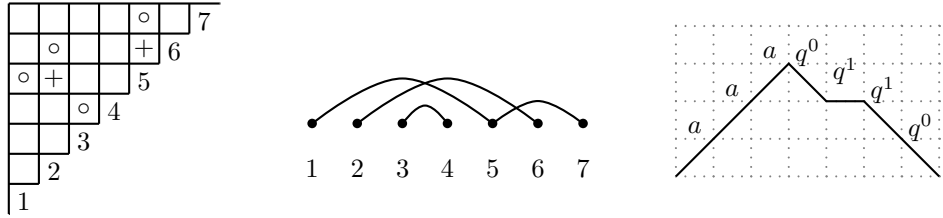
\begin{figure}[h!tp] \center \psset{unit=4mm}
\begin{pspicture}(1,0)(7,7)
\psline(0,7)(0,0)  \psline(0,7)(7,7)
\psline(1,7)(1,1)  \psline(0,6)(6,6)
\psline(2,7)(2,2)  \psline(0,5)(5,5)
\psline(3,7)(3,3)  \psline(0,4)(4,4)
\psline(4,7)(4,4)  \psline(0,3)(3,3)
\psline(5,7)(5,5)  \psline(0,2)(2,2)
\psline(6,7)(6,6)  \psline(0,1)(1,1)
\psline(7,7)(7,7)  \psline(0,0)(0,0)
\rput(0.5,4.5){$\circ$}\rput(1.5,5.5){$\circ$}
\rput(2.5,3.5){$\circ$}\rput(4.5,6.5){$\circ$}
\rput(1.5,4.5){$+$}    \rput(4.5,5.5){$+$}
\rput(0.5,0.4){ 1 }
\rput(1.5,1.4){ 2 }
\rput(2.5,2.4){ 3 }
\rput(3.5,3.4){ 4 }
\rput(4.5,4.4){ 5 }
\rput(5.5,5.4){ 6 }
\rput(6.5,6.4){ 7 }
\end{pspicture}
\hspace{1cm}   \psset{unit=6mm}
\begin{pspicture}(1,-2)(7,1)
\psdots(1,0)(2,0)(3,0)(4,0)(5,0)(6,0)(7,0)
\pscurve[arrowscale=1.5]{-}(1,0)(3,1)(5,0)     \pscurve[arrowscale=1.5]{-}(5,0)(6,0.5)(7,0)
\pscurve[arrowscale=1.5]{-}(3,0)(3.5,0.4)(4,0) \pscurve[arrowscale=1.5]{-}(2,0)(4,1)(6,0)
\rput(1,-1){1}
\rput(2,-1){2}
\rput(3,-1){3}
\rput(4,-1){4}
\rput(5,-1){5}
\rput(6,-1){6}
\rput(7,-1){7}
\end{pspicture}
\hspace{1cm}   \psset{unit=5mm}
\begin{pspicture}(0,-1)(7,4)
\psgrid[gridcolor=gray,griddots=4,subgriddiv=0,gridlabels=0](0,0)(7,4)
\psline(0,0)(1,1)(2,2)(3,3)(4,2)(5,2)(6,1)(7,0)
\rput(0.5,1.3){$a$}
\rput(1.5,2.3){$a$}
\rput(2.5,3.3){$a$}
\rput(3.5,3.3){$q^0$}
\rput(4.5,2.8){$q^1$}
\rput(5.5,2.3){$q^1$}
\rput(6.5,1.3){$q^0$}
\end{pspicture}
\caption{\label{rook_stair}
A rook placement in a staircase diagram, a set partition, and a Charlier diagram.
}
\end{figure}

\section{Permutations and signed permutations}
\label{sec:sign}

\subsection{Permutations and inversions}

De M\'edicis and Viennot \cite{MeVi94} studied a $q$-analogue of Laguerre polynomials 
(with one parameter set to $1$),
whose $n$th moment is the classical $q$-factorial $[n]_q!$.
They are a particular case of the
little $q$-Jacobi polynomials. The recurrence relation is: 
\begin{equation}
  x L_n(x) = L_{n+1}(x) + q^n([n]_q+[n+1]_q) L_n(x) + q^{2n-1}[n]_q^2 L_{n-1}(x).
\end{equation}

Recall that an {\it inversion}
of a permutation $\pi$ is a pair $(i,j)$
with $i<j$ and $\pi(i)>\pi(j)$. We denote the number of inversions
of $\pi$ by $inv(\pi)$.
It is well known that
$$
F_n(q):=\sum_{\pi\in S_n}q^{inv(\pi)}=[n]_q!.
$$

\begin{proposition}\label{inv-tableaux}
Let $W$ and $V$ be vectors, and $D$ and $E$ be matrices, satisfying
\begin{equation}
\braket W V =1, \qquad 
D\ket{V} = E\ket{V} \qquad
\bra{W}E=\bra{W}, \qquad 
DE=qED+D.
\end{equation}
Then the generating function $F_n(q)$ 
is equal to
$
\bra{W}D^n\ket{V}.
$
%\marginpar{This can maybe be generalized,where we count the number of columns with an extra parameter.}
\end{proposition}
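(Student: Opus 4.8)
The plan is to use the Matrix Ansatz machinery exactly as in the permutation tableaux and $q$-Hermite examples above: I will set up a recursive decomposition on permutations whose recursion matches the given relations on $D$, $E$, $\bra{W}$, $\ket{V}$, and then invoke the fact that these relations determine $\bra{W}D^n\ket{V}$ uniquely. First I would observe that the relation $DE=qED+D$ can be rewritten as $DE = q ED + D = (qE+I)D$, so inductively any word of the form $D^n$ does not directly appear to need $E$'s — but the point is that we want to expand $\bra{W}D^n\ket{V}$ by peeling off structure. The cleaner route is to find a concrete tridiagonal-type solution: take $\bra{W}=(1,0,0,\dots)$, $\ket{V}=(1,0,0,\dots)^T$, and look for $D$ upper triangular and $E$ lower triangular (or vice versa) with $DE-qED=D$, together with $\bra{W}E=\bra{W}$ and $D\ket{V}=E\ket{V}$. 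Then $M:=D$ (or an appropriate combination) will be tridiagonal and I can read off $\bra{W}D^n\ket{V}$ as a weighted Motzkin path generating function via Theorem~\ref{moment-motzkin}.

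The key steps, in order, are: (1) exhibit explicit matrices $D$ and $E$ satisfying the four relations — I expect $D$ with entries $d_{i,i+1}=1$ (or a $q$-power) and $d_{i,i}$ something like $q^i[i]_q$ or $q^i([i]_q+[i+1]_q)$-related, built so that $D$ itself is essentially the Laguerre recurrence matrix $M$ from the displayed recurrence $xL_n = L_{n+1} + q^n([n]_q+[n+1]_q)L_n + q^{2n-1}[n]_q^2 L_{n-1}$; (2) check that with $\bra W,\ket V$ the standard basis covectors, $\bra{W}E=\bra{W}$ and $D\ket{V}=E\ket{V}$ hold and $\braket{W}{V}=1$; (3) verify the commutation $DE-qED=D$ by a direct entrywise computation, which reduces to a handful of $q$-binomial/$q$-integer identities; (4) conclude via the Matrix Ansatz uniqueness principle (as used for (\ref{firstrelation})--(\ref{secondthirdrelation})) that any solution gives the same value $\bra{W}D^n\ket{V}$, hence it equals $\bra{W}M^n\ket{V}=\mu_n$ for the de M\'edicis--Viennot $q$-Laguerre polynomials; (5) invoke the known fact (stated in the excerpt) that this $n$th moment equals $[n]_q! = F_n(q)=\sum_{\pi\in S_n}q^{inv(\pi)}$.

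Alternatively — and this is probably the intended ``combinatorial solution'' in the spirit of the paper — I would skip explicit matrices and instead directly build a recursive decomposition of $S_n$ matching the relations: interpret $\bra{W}X\ket{V}$ for a word $X$ in $D,E$ as a generating function for certain subexcedant-function or ``Laguerre history'' objects of shape $X$, show that splitting off the last letter of $X$ and classifying the local configuration gives precisely $\Tab$-style identities $G(XDE\cdots)=qG(XED\cdots)+G(XD\cdots)$ etc., plus the boundary relations $G(EX)=G(X)$ and $G(XD\text{-boundary})$ normalization, so that $G$ and $\bra{W}\cdot\ket{V}$ agree on all words; then $\bra{W}D^n\ket{V}$ enumerates the objects of shape $D^n$, which are in bijection with $S_n$ with $inv$ tracked by $q$.

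The main obstacle I anticipate is step (3)/(1): getting the diagonal entries of $D$ exactly right so that $DE-qED=D$ holds on the nose while simultaneously $D$ reproduces the somewhat asymmetric Laguerre recurrence coefficients $q^n([n]_q+[n+1]_q)$ and $q^{2n-1}[n]_q^2$. Since the recurrence has $\lambda_n \ne 1$, one first applies Remark~\ref{shift}-type rescalings (the remark that $m'_{i,i+1}m'_{i+1,i}=m_{i,i+1}m_{i+1,i}$ preserves $\bra{W}M^n\ket{V}$) to normalize the off-diagonal product, and then one must check the Ansatz relation survives that normalization; reconciling the rescaling with the clean form $DE-qED=D$ is the delicate bookkeeping. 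Everything else is routine: the uniqueness argument is identical to the one already spelled out for permutation tableaux, and the identification with $[n]_q!$ is quoted from \cite{MeVi94}.
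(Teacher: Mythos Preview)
Your proposal is correct and in fact covers both of the paper's arguments, but with the emphasis reversed. The paper's primary proof is the combinatorial one, and it is more concrete than your sketch: the objects are \emph{inversion tableaux}, namely $0$--$1$ fillings of the shifted staircase diagram $(n,n-1,\dots,1)$ in which every diagonal cell contains a $1$ and every $0$ has only $0$'s above it. The shape-by-shape recursion for such fillings matches $DE=qED+D$, $D\ket{V}=E\ket{V}$, $\bra{W}E=\bra{W}$, and for the full staircase shape $D^n$ the bijection with $S_n$ is immediate because the number of non-diagonal $1$'s column by column is exactly the inversion table of a permutation. Your ``subexcedant function / Laguerre history'' alternative is this same idea in spirit but does not name the object, and it is precisely this choice that makes the link to $inv$ transparent without further work.

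The paper then gives, as a secondary argument, exactly your matrix route: it writes down an explicit tridiagonal $D$ (together with an auxiliary $E$) realizing $b_i=q^i([i]_q+[i+1]_q)$ and $\lambda_i$ proportional to $q^{2i-1}[i]_q^2$, and identifies $\bra{W}D^n\ket{V}$ with the $q$-Laguerre moment $[n]_q!$ via \cite{Bi93}; the ``delicate bookkeeping'' you anticipate does not materialize. One point worth making explicit rather than merely invoking: the relations here \emph{do} determine $\bra{W}D^n\ket{V}$ uniquely --- replace the rightmost $D$ by $E$, commute $E$ to the left using $D^kE=q^kED^k+[k]_qD^k$, and absorb via $\bra{W}E=\bra{W}$ to obtain $\bra{W}D^n\ket{V}=[n]_q\,\bra{W}D^{n-1}\ket{V}$ --- but this is not literally the same normal-form reduction as for the original ASEP Ansatz, so it should be checked rather than assumed.
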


To prove this, we will use certain tableaux contained in shifted Young diagrams.

\begin{definition}
We equip the set $S:=\{(i,j) \in \mathbf{N} \times \mathbf{N} \ \vert \ i \leq j \}$
with the partial order:
$$(i,j) \geq (i',j') \text{ if and only if } i \geq i' \text{ and } j \geq j'.$$
A finite order ideal of $S$ is called a {\it shifted Young diagram}.
\end{definition}
See the left of Figure \ref{figInv1} for an example of a shifted Young diagram.

\begin{figure}[h!tp]
\centering
\psset{unit=0.4cm}\psset{linewidth=0.4mm}
\begin{pspicture}(0,0)(5,5)
\psline(5,0)(5,2)(6,2)(6,4)(7,4)(7,5)(0,5)(0,4)(1,4)(1,3)(2,3)(2,2)(3,2)(3,1)(4,1)(4,0)(5,0)
\psline(1,5)(1,4)
\psline(2,5)(2,3)
\psline(3,5)(3,2)
\psline(4,5)(4,1)
\psline(5,1)(4,1)
\psline(6,2)(3,2)
\psline(6,3)(2,3)
\psline(7,4)(1,4)
\psline(5,2)(5,5)
\psline(6,4)(6,5)
\rput(7.3,4.5){\tiny{D}}
\rput(6.6,3.8){\tiny{E}}
\rput(6.3,3.5){\tiny{D}}
\rput(6.3,2.5){\tiny{D}}
\rput(5.6,1.8){\tiny{E}}
\rput(5.3,1.5){\tiny{D}}
\rput(5.3,0.5){\tiny{D}}
\end{pspicture}
\hspace{2cm}
\begin{pspicture}(0,0)(5,5)
\psline(5,0)(5,5)(0,5)(0,4)(1,4)(1,3)(2,3)(2,2)(3,2)(3,1)(4,1)(4,0)(5,0)
\psline(1,5)(1,4)
\psline(2,5)(2,3)
\psline(3,5)(3,2)
\psline(4,5)(4,1)
\psline(5,1)(4,1)
\psline(5,2)(3,2)
\psline(5,3)(2,3)
\psline(5,4)(1,4)
%\rput(-0.5,4.5){\tiny 1}
%\rput(0.5,3.5){\tiny 2}
%\rput(1.5,2.5){\tiny 3}
%\rput(2.5,1.5){\tiny 4}
%\rput(3.5,0.5){\tiny 5}
\rput(0.5,4.5){1}
\rput(1.5,4.5){1}
\rput(2.5,4.5){0}
\rput(3.5,4.5){0}
\rput(4.5,4.5){0}
\rput(5.5,4.5){D}

\rput(1.5,3.5){1}
\rput(2.5,3.5){0}
\rput(3.5,3.5){1}
\rput(4.5,3.5){0}
\rput(5.5,3.5){D}

\rput(2.5,2.5){1}
\rput(3.5,2.5){1}
\rput(4.5,2.5){1}
\rput(5.5,2.5){D}

\rput(3.5,1.5){1}
\rput(4.5,1.5){1}
\rput(5.5,1.5){D}

\rput(4.5,0.5){1}
\rput(5.5,0.5){D}
\end{pspicture}
\hspace{2cm}
\begin{pspicture}(0,0)(5,5)
\psline(5,0)(5,5)(0,5)(0,4)(1,4)(1,3)(2,3)(2,2)(3,2)(3,1)(4,1)(4,0)(5,0)
\psline(1,5)(1,4)
\psline(2,5)(2,3)
\psline(3,5)(3,2)
\psline(4,5)(4,1)
\psline(5,1)(4,1)
\psline(5,2)(3,2)
\psline(5,3)(2,3)
\psline(5,4)(1,4)
%\rput(-0.5,4.5){\tiny 1}
%\rput(0.5,3.5){\tiny 2}
%\rput(1.5,2.5){\tiny 3}
%\rput(2.5,1.5){\tiny 4}
%\rput(3.5,0.5){\tiny 5}
\rput(0.5,4.5){1}
\rput(1.5,4.5){0}
\rput(2.5,4.5){1}
\rput(3.5,4.5){0}
\rput(4.5,4.5){0}
\rput(5.5,4.5){D}

\rput(1.5,3.5){1}
\rput(2.5,3.5){1}
\rput(3.5,3.5){1}
\rput(4.5,3.5){0}
\rput(5.5,3.5){D}

\rput(2.5,2.5){1}
\rput(3.5,2.5){1}
\rput(4.5,2.5){1}
\rput(5.5,2.5){D}

\rput(3.5,1.5){1}
\rput(4.5,1.5){1}
\rput(5.5,1.5){D}

\rput(4.5,0.5){1}
\rput(5.5,0.5){D}
\end{pspicture}
\caption{A shifted Young diagram and inversion tableaux}
\label{figInv1}
\end{figure}
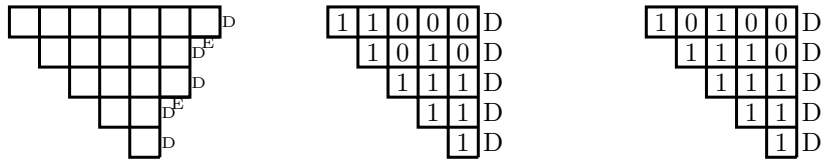

\begin{proof} 
Consider $0-1$ tableaux contained in shifted Young diagrams such that 
each cell of the main
diagonal contains a 1 and any cell that contains
a 0 has only 0's above it. 
See the diagrams of  Figure
\ref{figInv1}. 
Then there is a simple recurrence for such tableaux of a given shape,
that corresponds to the relations of Proposition \ref{inv-tableaux}.  Therefore 
$\bra{W}D^n\ket{V}$ 
enumerates such tableaux of staircase shape
$(n,n-1,n-2,\ldots,1)$.  We call such tableaux contained in a staircase 
shape {\it inversion tableaux}.

Label the columns of an inversion tableau from 1 to $n$ from right to left.
Then there is 
a bijection between inversion tableau and permutations, such that 
the number of superfluous
$1$'s in column $i$ corresponds to the number of inversions
of $\pi(i)$ (i.e. the number of $j>i$ such that
$\pi(i)>\pi(j)$). Indeed the sequence of the number of superfluous $1$'s in column $i=1,\ldots ,n$
is the inversion table of the permutation.
For example, the tableau 
in the middle of
 Figure \ref{figInv1} gives the inversion table $(2,2,0,1,0)$ and the permutation
$(3,4,1,5,2)$ and  the tableau 
on the right of Figure \ref{figInv1} gives the inversion table $(2,2,2,0,0)$ and the permutation
$(3,4,5,1,2)$.
This proves the proposition.

Note also that the following vectors and matrices satisfy the Ansatz:
$$
E_{i,i}=E_{i,i+1}=[i+1]_q, \qquad E_{i,j}=0 \ {\rm otherwise},
$$
$$
D_{i,i-1}=q^i[i]_q,\qquad D_{i,i}=q^i([i]_q+[i+1]_q), \qquad
\qquad D_{i,i+1}=q^i[i+1]_q,\qquad D_{i,j}=0\ {\rm otherwise},
$$
and $\bra{W}=(1,0,0,\ldots)$ and $\ket{V}=(1,0,0,\ldots )^T$.
Therefore $\bra{W}D^n\ket{V}$ is the generating function of weighted
 Motzkin paths
of length $n$ with weights $\lambda_i=q^{2i-1}[i+1]_q^2$ and $b_i=q^i([i]_q+[i+1]_q)$.
This is known to be the generating function of permutations counted
by inversions. See for example \cite{Bi93}.
\end{proof}

\subsection{Permutations and crossings}

\begin{definition}\cite{Cor07} \label{croreg}
A \emph{crossing} in a permutation $\pi$ is a pair $(i,j)$
such that
\begin{itemize}
\item $i< j\le \pi(i)<\pi(j)$ or
\item $i>j>\pi(i)>\pi(j)$.
\end{itemize}
\end{definition}

They appear in the combinatorial interpretation of the moments of Al-Salam-Chihara $q$-Laguerre 
polynomials \cite{KSZ08}. These are another $q$-analogue of Laguerre polynomials whose 
recurrence relation is:
\begin{equation}
  xL^*_n(x) = L^*_{n+1}(x) + (y[n+1]_q+[n]_q)L^*_n(x) + y[n]_ q^2L^*_{n-1}(x).
\end{equation}
See \cite{SS96} for some generalizations of these polynomials.

\begin{proposition}\cite{CoWi07a}
The generating function of permutations of size $n$ enumerated
according to their crossings is 
$$
\bra{W}(D+E)^n\ket{V}
$$
with 
\begin{equation}
\braket W V =1, \qquad
\bra{W}E=0, \qquad
D\ket{V}=\ket{V},\qquad
DE=qED+D+E.
\end{equation}
\end{proposition}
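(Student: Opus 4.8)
The plan is to follow the same blueprint used earlier in this section for permutations and inversions, replacing inversion tableaux by permutation tableaux. First I would recall the connection between permutation tableaux and the Matrix Ansatz of (\ref{relED}): by (\ref{firstrelation}) and (\ref{secondthirdrelation}) we already know that the generating functions $\Tab(X)$ satisfy exactly the relations imposed on $\bra{W} X \ket{V}$ here (with $\alpha=\beta=1$, and with $\bra{W}E = 0$ replacing $\alpha\bra{W}E=\bra{W}$, i.e.\ $u(T)$ dropped since $\beta=1$ and the first-row statistic handled separately). So the quantity $\bra{W}(D+E)^n\ket{V}$, expanded via the relation $DE=qED+D+E$ and terminated by $\bra{W}E=0$ and $D\ket{V}=\ket{V}$, is a sum over permutation tableaux $T$ of length $n+1$ of $q^{wt(T)}$, where $wt(T)$ counts superfluous $1$'s. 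This part is essentially a restatement of the permutation-tableaux discussion in the introduction, specialized to $\alpha=\beta=1$.

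Next I would invoke the known bijection between permutation tableaux of length $n+1$ and permutations of $S_{n+1}$ (cited as \cite{Pos06, StWi07, Bur07, CoNa09}). The heart of the argument is to check that under a suitable choice of this bijection, the statistic $wt(T)$ (number of superfluous $1$'s) corresponds precisely to the number of crossings $cr(\pi)$ in the sense of Definition \ref{croreg}. Here I would appeal to the result of \cite{CoWi07a}, where exactly this correspondence between superfluous ones in permutation tableaux and crossings of the associated permutation was established (indeed the proposition is attributed to that paper). Combining the two steps gives
$$\bra{W}(D+E)^n\ket{V} = \sum_{T}q^{wt(T)} = \sum_{\pi\in S_{n+1}}q^{cr(\pi)},$$
where the first sum is over permutation tableaux of length $n+1$. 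A small bookkeeping point to be careful about: the statement speaks of permutations ``of size $n$'' while tableaux of shape a length-$n$ word in $D,E$ have length $n+1$; I would either note that the bijection identifies these, or track the index shift explicitly.

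Alternatively — and this is the route I would actually write out in detail as a self-contained proof — I would exhibit an explicit solution to the relations and identify $\bra{W}(D+E)^n\ket{V}$ with a Motzkin-path generating function, exactly as in the proof of Proposition \ref{inv-tableaux}. Since $\bra{W}E=0$, $D\ket{V}=\ket{V}$, and $DE=qED+D+E$, one checks that the tridiagonal matrix $M=D+E$ with $m_{i,i+1}=[i+1]_q$, $m_{i,i}=[i]_q+[i+1]_q$, $m_{i,i-1}=[i]_q^2$ (this is the $y=1$ specialization of the Al-Salam--Chihara $q$-Laguerre recurrence displayed just above) arises from a valid choice of $D$ and $E$; then Theorem \ref{moment-motzkin} expresses $\bra{W}M^n\ket{V}$ as the generating function for weighted Motzkin paths with those weights, which by the combinatorial theory of these $q$-Laguerre moments (\cite{KSZ08}) equals $\sum_{\pi\in S_n}q^{cr(\pi)}$. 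The main obstacle in either approach is the combinatorial core: verifying that the recursive decomposition of permutation tableaux (the corner-box analysis behind (\ref{firstrelation})--(\ref{secondthirdrelation}) with $\alpha=\beta=1$) does produce the claimed relations with $\bra{W}E=0$ rather than $\bra{W}E=\bra{W}$, and that the weight statistic matches crossings under the tableau--permutation bijection; once that is granted, the Matrix Ansatz machinery assembled in this paper makes the rest formal. I would therefore spend the bulk of the written proof on the decomposition and the statistic-matching, and treat the matrix/Motzkin identification as a direct application of Theorem \ref{moment-motzkin}.
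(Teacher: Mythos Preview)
Your approach is essentially the paper's own: it too gives both the permutation-tableaux argument (citing the introduction's decomposition and then the bijection sending superfluous $1$'s to crossings) and the explicit tridiagonal solution leading to Motzkin paths with weights $b_i=[i]_q+[i+1]_q$, $\lambda_i=[i]_q^2$.

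Two small corrections. First, in your explicit matrix you want $m_{i,i-1}=[i]_q$ (the paper takes $D_{i,i}=D_{i,i+1}=[i+1]_q$, $E_{i,i}=E_{i,i-1}=[i]_q$); it is the product $m_{i,i-1}m_{i-1,i}$ that equals $\lambda_i=[i]_q^2$, not the single entry. As written, your $M$ would give $\lambda_i=[i]_q^3$. Second, the superfluous-$1$'s $\leftrightarrow$ crossings correspondence is attributed in the paper to \cite{StWi07}, and the Motzkin $\leftrightarrow$ crossings identification to \cite{Cor07,SS96}. On the index issue you flagged: with the relation $\bra{W}E=0$ (rather than $\bra{W}E=\bra{W}$) the word of length $n$ is taken as the \emph{full} southeast border, so the tableaux in play have length $n$, matching permutations of $[n]$; the paper simply states this and refers back to the introduction's argument without spelling out the shift.
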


\begin{proof}
One can easily check that $
\bra{W}(D+E)^n\ket{V}
$ is the generating function of permutation tableaux
of length $n$ where $q$ counts the superfluous $1$'s, using the same arguments
as the ones developed in the introduction.
See \cite{CoWi07a}, and also
\cite{Jos10}, for proofs.
From \cite{StWi07} we know that permutation tableaux
of length $n$ with $j$ superfluous $1$'s are in bijection
with permutations of $[n]$ with $j$ crossings.

One can also prove  the result with a solution of the Matrix Ansatz.
Indeed the matrices $D$ and $E$ with entries given by 
\begin{eqnarray*}
%\bra{W}&=&(1,0,0,\ldots )\\
%\ket{V}&=&(1,0,0,\ldots )^T\\
D_{i,i}=[i+1]_q, &  D_{i,i+1}=[i+1]_q, & D_{i,j}=0\ {\rm otherwise},\\
E_{i,i}=[i]_q, &  E_{i,i-1}=[i]_q, & E_{i,j}=0\ {\rm otherwise},\\
\bra{W}=(1,0,\ldots),\ \ \ket{V}=(1,0,\ldots )^T
\end{eqnarray*}
are a solution.
Therefore $\bra{W}(D+E)^n\ket{V}$ is the generating function of weighted 
Motzkin paths
of length $n$ with weights $\lambda_i=[i]_q^2$ and $b_i=([i]_q+[i+1]_q)$.
This is known to also be the generating function of permutations counted
by crossings \cite{Cor07,SS96}.
\end{proof}

\subsection{Signed permutations}

%We use a notation for signed permutation that is a bit different than usual.
A signed permutation of $\{1,\ldots ,n\}$ 
is a sequence of integers $\left(\pi(1),\ldots ,\pi(n)\right) $
such that $-n\le \pi(i)\le n$ and $\cup_{\ell=1}^n |\pi(\ell)|=\{1,\ldots ,n\}$.
For example
$\left(
2,3,7,-5,6,1,-4,8
\right)
$ is a signed permutation of $\{1,\ldots ,8\}$. Let $B_n$ be the set of signed permutations
of $\{1,\ldots ,n\}$.

We extend the definition of crossings of permutations to signed permutations:
\begin{definition}  \label{def_crossB}
A \emph{crossing} of a signed permutation $\pi=\left(\pi(1),\ldots ,\pi(n)\right)$ is a pair $(i,j)$ with $i,j>0$
such that
\begin{itemize}
\item $i<j\le \pi(i)<\pi(j)$ or
\item $-i<j\le -\pi(i)<\pi(j)$ or
\item $i>j>\pi(i)>\pi(j)$.
\end{itemize}
\end{definition}

\begin{remark} \label{remcroinv}
Note that if for all $i$, $\pi(i)>0$, 
then the crossings of the signed permutation
are the same as the crossings in Definition \ref{croreg} for usual permutations.
Moreover if for all $i$, $\pi(i)<0$, then the crossings of the signed permutation
are the same as the inversions of the permutation 
$(-\pi(1),\ldots ,-\pi(n))$.
This notion of crossing has the nice property that 
the number of signed permutations of $[n]$ with no crossings
is $\binom {2n}{n}$,
the Catalan number of type B,
just as the number of usual permutations of $[n]$ with no crossings
is the Catalan number of type A, see \cite{Wil05}.
\end{remark}

\begin{definition} \cite{LaWi08}
A {\em type B permutation tableau} of length $n$ is 
a filling with $0$'s and $1$'s of a shifted Young diagram of shape 
$\lambda=(\lambda_1,\lambda_2,\ldots ,\lambda_k)$, where
$\lambda_1\le n$, such that:
%Each column $i$ contains $\lambda_i$ cells and the topmost
%cell is in position $(-i,-i)$. The diagram is complemented with $n-\lambda_1$
%rows of length 0. 
\begin{itemize}
\item Any 0 on the main diagonal has only 0's
above it.
\item There is no 0 with a 1 above it and a 1 to its left.
\item Each row has at least one 1.
\end{itemize}
\end{definition}

\begin{figure}[h!tp]
\centering
\psset{unit=0.4cm}\psset{linewidth=0.4mm}
\begin{pspicture}(0,0)(5,5)
\psline(5,0)(5,2)(6,2)(6,4)(7,4)(7,5)(0,5)(0,4)(1,4)(1,3)(2,3)(2,2)(3,2)(3,1)(4,1)(4,0)(5,0)
\psline(7,5)(8,5)
\psline(1,5)(1,4)
\psline(2,5)(2,3)
\psline(3,5)(3,2)
\psline(4,5)(4,1)
\psline(5,1)(4,1)
\psline(6,2)(3,2)
\psline(6,3)(2,3)
\psline(7,4)(1,4)
\psline(5,2)(5,5)
\psline(6,4)(6,5)
\rput(0.5,4.5){1}
\rput(1.5,4.5){0}
\rput(2.5,4.5){1}
\rput(3.5,4.5){0}
\rput(4.5,4.5){0}
\rput(5.5,4.5){1}
\rput(6.5,4.5){0}
\rput(1.5,3.5){1}
\rput(2.5,3.5){1}
\rput(3.5,3.5){0}
\rput(4.5,3.5){0}
\rput(5.5,3.5){1}
\rput(2.5,2.5){1}
\rput(3.5,2.5){0}
\rput(4.5,2.5){0}
\rput(5.5,2.5){1}
\rput(3.5,1.5){0}
\rput(4.5,1.5){1}
\rput(4.5,0.5){1}
\rput(5.3,0.5){\tiny{D}}
\rput(5.3,1.5){\tiny{D}}
\rput(5.7,1.8){\tiny{E}}
\rput(6.3,2.5){\tiny{D}}
\rput(6.3,3.5){\tiny{D}}
\rput(6.7,3.8){\tiny{E}}
\rput(7.3,4.5){\tiny{D}}
\rput(7.7,4.9){\tiny{E}}

%\rput(7.3,4.5){\tiny{D}}
%\rput(6.6,4){\tiny{E}}
%\rput(6.3,3.5){\tiny{D}}
%\rput(6.3,2.5){\tiny{D}}
%\rput(5.6,2){\tiny{E}}
%\rput(5.3,1.5){\tiny{D}}
%\rput(5.3,0.5){\tiny{D}}
\end{pspicture}\hspace{2cm}
\caption{\label{exaAB} Permutation tableau of type B}
\end{figure}
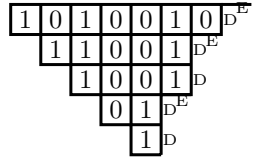

These tableaux are in bijection with signed permutations,
and are related to a cell decomposition of the totally non-negative
part of a type B Grassmannian \cite{LaWi08}.

An example is given in Figure \ref{exaAB}.
Note that we allow rightmost columns of length $0$.
%We now define some statistics. The {\it length} is the number of rows.
%A $1$ is {\it on the main diagonal} if it is in a cell $(-i,-i)$.
%Just as we did for permutation tableaux, 
%we label the southeast border of the diagram. Given a diagram of length $n$, we
%go from top to bottom and right to left and label from $1$ to $n$. 
%Then for each column labelled by $i$, we label the row
%that shares a main diagonal cell with that column with $-i$. See Figure \ref{exaAB}.
We will label the southeast border of a tableau
with $D$'s and $E$'s, writing a $D$ (resp. $E$)
whenever we go along a vertical (resp. horizontal) step.
This word in $D$'s and $E$'s encodes the {\it shape} of the tableau.

Let $B_n(q,r)$ be the generating function of 
permutation tableaux of type B of length $n$,
where $q$ counts
the superfluous $1$'s and $r$ counts the $1$'s on the main diagonal. 
\begin{proposition}
The generating function $B_n(q,r)$ is equal to
$$
\bra{W}(D+E)^n\ket{V}
$$
where
\begin{equation}
\braket WV =1, \qquad 
\bra{W}E=\bra{W}, \qquad
D\ket{V}=rE\ket{V}, \qquad
DE=qED+E+D.
\end{equation}
\end{proposition}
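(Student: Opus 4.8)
The plan is to imitate exactly the argument used in the introduction for ordinary permutation tableaux, but now keeping track of the extra parameter $r$ coming from the main diagonal. Concretely, for a word $X$ in $D$ and $E$ of length $n$, let $\mathrm{Tab}_B(X)$ denote the generating function $\sum_T q^{wt(T)} r^{d(T)}$, where the sum is over type B permutation tableaux of shape $X$, $wt(T)$ is the number of superfluous $1$'s, and $d(T)$ is the number of $1$'s on the main diagonal. The goal is to show that the three-term relation and the two boundary relations of the stated Matrix Ansatz are satisfied by $\mathrm{Tab}_B$, so that $\mathrm{Tab}_B(X) = \bra{W} X \ket{V}$ for the matrices and vectors of the proposition; summing over all $2^n$ words $X$ of length $n$ then gives $B_n(q,r) = \bra{W}(D+E)^n\ket{V}$.

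First I would establish the analogue of (\ref{firstrelation}), namely $\mathrm{Tab}_B(XDEY) = q\,\mathrm{Tab}_B(XEDY) + \mathrm{Tab}_B(XDY) + \mathrm{Tab}_B(XEY)$ for all words $X,Y$. This is the same corner-box decomposition as in Figure \ref{decompA}: a corner cell of a type B tableau of shape $XDEY$ contains either a superfluous $1$ (delete the cell, getting shape $XEDY$, weight factor $q$), a topmost $1$ which is not on the diagonal (delete its column, getting shape $XDY$), or a $0$ with only $0$'s to its left (delete its row, getting shape $XEY$). The one point requiring care is that the corner cell is off the main diagonal — since $XDEY$ ends in $\dots DE\dots$ at that corner, and a diagonal cell of a shifted shape sits at the bottom of its column where the border makes a $D$ then continues, the corner box $DE$ is never a diagonal box — so the statistic $d(T)$ is unaffected by the decomposition, and the identity goes through verbatim.

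Next I would check the two boundary relations. The relation $\bra{W}E = \bra{W}$ should translate, as in (\ref{secondthirdrelation}), to $\mathrm{Tab}_B(EX) = \mathrm{Tab}_B(X)$: an initial $E$ step corresponds to a top row that is forced (its leftmost entry, being in the first row, can be deleted along with that row) and this is clear on inspection. The relation $D\ket{V} = rE\ket{V}$ is where $r$ enters: a word $X$ ending in $D$ corresponds to a tableau whose last border step is vertical, i.e. the shape has a diagonal box in the rightmost column; that box contains a $1$ (the row must contain a $1$ and it is the only cell available) contributing a factor $r$, and removing it relates shape $XD$ to shape $XE$ — giving $\mathrm{Tab}_B(XD) = r\,\mathrm{Tab}_B(XE)$, matching $D\ket{V} = rE\ket{V}$. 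Together with $\braket{W}{V}=1$ (the empty tableau) these are precisely the Ansatz relations, so the Matrix Ansatz computation (expand any word in $D,E$ using $DE=qED+E+D$ down to a sum of words $E^k$ and $E^k D^\ell$, then apply the boundary relations) shows $\mathrm{Tab}_B$ and $\bra{W}\cdot\ket{V}$ agree on all words, hence on $(D+E)^n$.

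The main obstacle, and the only place deviating from the type A argument, is verifying that the diagonal statistic $d(T)$ behaves correctly under the corner-decomposition and under the boundary moves — in particular confirming that corner boxes are never diagonal boxes, so that the $q$-relation preserves $d$, and that the single diagonal box picked up when reading a terminal $D$ is exactly accounted for by the factor $r$ in $D\ket{V}=rE\ket{V}$. Once the bookkeeping of $d(T)$ is pinned down, everything else is a direct transcription of the introduction's proof for permutation tableaux; one may also, if desired, exhibit an explicit tridiagonal solution and invoke Theorem \ref{moment-motzkin} to reinterpret $B_n(q,r)$ as a moment of the Al-Salam--Chihara $q$-Laguerre polynomials, but that is not needed for the proposition itself.
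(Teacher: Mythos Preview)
Your overall strategy—set up a per–shape generating function $\mathrm{Tab}_B(X)$ and verify that it satisfies the four Ansatz relations—is the same as the paper's, but the corner decomposition you import from type A does not survive the passage to type B, and this is a genuine gap, not just bookkeeping of the diagonal statistic $d(T)$.

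The point is that in type B the constraint is ``each \emph{row} has at least one $1$'' (not each column, as in type A). Consequently a corner box equal to $0$ \emph{always} has a $1$ somewhere to its left, so your case ``$0$ with only $0$'s to its left: delete the row'' never occurs. Dually, your case ``topmost $1$: delete the column'' can destroy the row condition: if that $1$ happened to be the unique $1$ in its row, removing the column leaves that row with no $1$. The paper's decomposition is genuinely different: for a corner $1$ one asks whether it is the \emph{unique $1$ in its row}; if so, the diagonal cell of that row is $0$, hence (by the diagonal rule) the entire column above that diagonal cell is $0$, and one deletes the row together with that diagonal column. If instead the corner is $0$, the forbidden-pattern rule forces the whole column above the corner to be $0$, and one deletes that column. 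These are the moves pictured in Figure~\ref{decomp}. Your reading of the boundary relations is also off: $\bra{W}E=\bra{W}$ corresponds to a length-$0$ column on the \emph{right} of the shifted shape (not a top row), and $D\ket{V}=rE\ket{V}$ corresponds to a bottommost row of length one, whose single diagonal cell must carry a $1$ and hence the factor $r$.

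As a concrete sanity check that the type A recursion fails here: for the one-row, two-cell shifted shape (shape word $DE$) the three valid fillings $(1,0),(0,1),(1,1)$ give $\mathrm{Tab}_B(DE)=2r+1$, whereas your claimed relation would give $q\,\mathrm{Tab}_B(ED)+\mathrm{Tab}_B(D)+\mathrm{Tab}_B(E)=qr+r+1$. These disagree for $q\neq 1$, so the relation $\mathrm{Tab}_B(XDEY)=q\,\mathrm{Tab}_B(XEDY)+\mathrm{Tab}_B(XDY)+\mathrm{Tab}_B(XEY)$ is false already at the first corner.
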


\begin{proof}
We illustrate a recursion for the generating function for 
type B permutation tableaux of a fixed shape, which mirrors the relations above.
First, the generating function of the tableaux of length 0 is 1 : this
corresponds to the fact that 
$\braket WV =1$.
Second, a bottommost row
of length one must contain a $1$ on the main diagonal.  This row
can be deleted, which corresponds to $D\ket{V}=rE\ket{V}$.
Third, any  column of length $0$ at the right can be deleted, which 
corresponds to $\bra{W} E=\bra{W}$.
Finally each corner box (which corresponds to the subword $DE$ 
in the word encoding the shape of the tableau) contains either:
\begin{itemize}
\item a superfluous $1$, so this box 
can be deleted (leaving a tableau whose shape has the $DE$ replaced by $ED$,
and whose weight is equal to the old weight divided by $q$)
\item the unique $1$ in its row, and consequently the row and column containing 
the main diagonal cell to its left can be deleted
($+D$)
\item a 0, and consequently its column  can be deleted ($+E)$.
\end{itemize}
See Figure \ref{decomp} for an example.
\end{proof}

\begin{figure}[h!tp]
\centering
\psset{unit=0.4cm}\psset{linewidth=0.4mm}
\begin{pspicture}(0,-1)(5,5)
\psline(5,0)(5,2)(6,2)(6,4)(7,4)(7,5)(0,5)(0,4)(1,4)(1,3)(2,3)(2,2)(3,2)(3,1)(4,1)(4,0)(5,0)
\psline(1,5)(1,4)
\psline(2,5)(2,3)
\psline(3,5)(3,2)
\psline(4,5)(4,1)
\psline(5,1)(4,1)
\psline(6,2)(3,2)
\psline(6,3)(2,3)
\psline(7,4)(1,4)
\psline(5,2)(5,5)
\psline(6,4)(6,5)
\rput(5.5,2.5){*}
\rput(5.3,0.5){\tiny{D}}
\rput(5.3,1.5){\tiny{D}}
\rput(5.7,1.8){\tiny{E}}
\rput(6.3,2.5){\tiny{D}}
\rput(6.3,3.5){\tiny{D}}
\rput(6.7,3.8){\tiny{E}}
\rput(7.3,4.5){\tiny{D}}
%\rput(6.6,4){\tiny{E}}
%\rput(6.3,3.5){\tiny{D}}
%\rput(6.3,2.5){\tiny{D}}
%\rput(5.6,2){\tiny{E}}
%\rput(5.3,1.5){\tiny{D}}
%\rput(5.3,0.5){\tiny{D}}
\rput(3,-1){$DE$}
\end{pspicture}\hspace{2cm}
\begin{pspicture}(0,-1)(5,5)
\rput(-1,3){$=$}
\psline(5,0)(5,2)(6,2)(6,4)(7,4)(7,5)(0,5)(0,4)(1,4)(1,3)(2,3)(2,2)(3,2)(3,1)(4,1)(4,0)(5,0)
\psline(1,5)(1,4)
\psline(2,5)(2,3)
\psline(3,5)(3,2)
\psline(4,5)(4,1)
\psline(5,1)(4,1)
\psline(6,2)(3,2)
\psline(6,3)(2,3)
\psline(7,4)(1,4)
\psline(5,2)(5,5)
\psline(6,4)(6,5)
%\rput(7.3,4.5){\tiny{D}}
%\rput(6.6,4){\tiny{E}}
%\rput(6.3,3.5){\tiny{D}}
%\rput(6.3,2.5){\tiny{D}}
%\rput(5.6,2){\tiny{E}}
%\rput(5.3,1.5){\tiny{D}}
%\rput(5.3,0.5){\tiny{D}}
\rput(5.5,2.5){1}
\rput(3,-1){$=qED$}
\end{pspicture}\hspace{2cm}
\begin{pspicture}(0,-1)(5,5)
\psline(5,0)(5,2)(6,2)(6,4)(7,4)(7,5)(0,5)(0,4)(1,4)(1,3)(2,3)(2,2)(3,2)(3,1)(4,1)(4,0)(5,0)
\psline(1,5)(1,4)
\psline(2,5)(2,3)
\psline(3,5)(3,2)
\psline(4,5)(4,1)
\psline(5,1)(4,1)
\psline(6,2)(3,2)
\psline(6,3)(2,3)
\psline(7,4)(1,4)
\psline(5,2)(5,5)
\psline(6,4)(6,5)
%\rput(7.3,4.5){\tiny{D}}
%\rput(6.6,4){\tiny{E}}
%\rput(6.3,3.5){\tiny{D}}
%\rput(6.3,2.5){\tiny{D}}
%\rput(5.6,2){\tiny{E}}
%\rput(5.3,1.5){\tiny{D}}
%\rput(5.3,0.5){\tiny{D}}
\rput(5.5,2.5){1}
\rput(4.5,2.5){0}
\rput(3.5,2.5){0}
\rput(2.5,2.5){0}
\rput(2.5,3.5){0}
\rput(2.5,4.5){0}
\rput(3,-1){$+E$}
\end{pspicture}\hspace{2cm}
\begin{pspicture}(0,-1)(5,5)
\psline(5,0)(5,2)(6,2)(6,4)(7,4)(7,5)(0,5)(0,4)(1,4)(1,3)(2,3)(2,2)(3,2)(3,1)(4,1)(4,0)(5,0)
\psline(1,5)(1,4)
\psline(2,5)(2,3)
\psline(3,5)(3,2)
\psline(4,5)(4,1)
\psline(5,1)(4,1)
\psline(6,2)(3,2)
\psline(6,3)(2,3)
\psline(7,4)(1,4)
\psline(5,2)(5,5)
\psline(6,4)(6,5)
%\rput(7.3,4.5){\tiny{D}}
%\rput(6.6,4){\tiny{E}}
%\rput(6.3,3.5){\tiny{D}}
%\rput(6.3,2.5){\tiny{D}}
%\rput(5.6,2){\tiny{E}}
%\rput(5.3,1.5){\tiny{D}}
%\rput(5.3,0.5){\tiny{D}}
\rput(5.5,2.5){0}
\rput(5.5,3.5){0}
\rput(5.5,4.5){0}
\rput(3,-1){$+D$}
\end{pspicture}\hspace{2cm}
\caption{Decomposition of permutation tableaux of type B }
\label{decomp}
\end{figure}
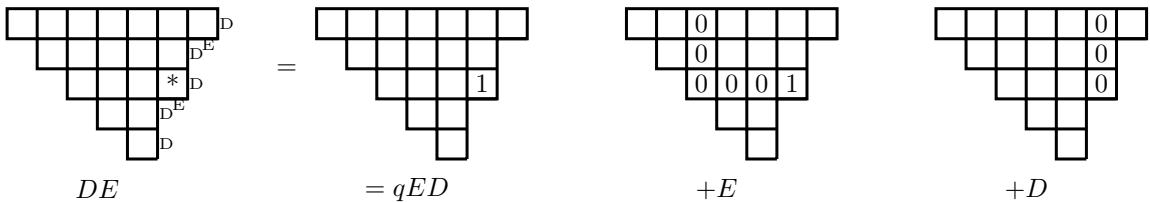

\begin{theorem}
There exists a one-to-one correspondence between
 signed permutations in $B_n$ with
$j$  crossings and $\ell$ minus signs and
permutation tableaux of type $B$ of length $n$ with $j$ superfluous
$1$'s and $\ell$ $1$'s on the main diagonal.
\end{theorem}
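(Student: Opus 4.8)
The plan is to obtain the theorem from the Matrix Ansatz rather than building the bijection by brute force. By the previous proposition, the generating function $B_n(q,r)$ for type B permutation tableaux of length $n$ (with $q$ marking superfluous $1$'s and $r$ marking $1$'s on the main diagonal) equals $\bra{W}(D+E)^n\ket{V}$ for \emph{any} $\bra{W},\ket{V},D,E$ satisfying
\[\braket WV =1,\qquad \bra WE=\bra W,\qquad D\ket V=rE\ket V,\qquad DE=qED+E+D.\]
So it suffices to prove that $\widetilde B_n(q,r):=\sum_{\pi\in B_n}q^{\mathrm{cros}(\pi)}r^{\#\{i:\pi(i)<0\}}$ satisfies the same identity; equality of the two generating functions for all $n$, together with the fact that both are polynomials in $q,r$ with nonnegative integer coefficients counting the objects in question, yields the claimed one-to-one correspondence (and, by tracking the recursion, a canonical bijection rather than merely an equinumerosity statement).

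First I would set up a combinatorial model of signed permutations carrying a recursive structure that mirrors a word $X=X_1\cdots X_n$ in $D$ and $E$. Following the circular/arc-diagram description of crossings (as in \cite{Cor07,Wil05}), encode $\pi\in B_n$ by scanning $i=1,\dots,n$ and recording at step $i$ whether $i$ is an ``opener'', ``closer'', ``transient'', or ``singleton'' site, together with its sign data; the half-open arcs present just before step $i$ play the role of the active rows of the tableau. A step of type $D$ should correspond to the $D$-side of a corner and a step of type $E$ to the $E$-side, so that $X$ records the shape. The crux is to check that when the shape has a corner $DE$, the local choices available to $\pi$ are exactly: (i) route an arc across the corner, crossing the $k$ arcs below it and contributing $q^k$ --- this is the $qED$ term and matches deleting a superfluous $1$; (ii) close an arc here --- the $+D$ term, matching deleting a topmost $1$ together with its column; (iii) start a fresh arc --- the $+E$ term, matching deleting an all-zero row. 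The boundary relations are read off the same way: $\bra WE=\bra W$ says a rightmost column of length $0$ (used by no arc) is free, and $D\ket V=rE\ket V$ encodes that closing the bottom arc at the diagonal forces a sign choice contributing the factor $r$ --- this is precisely where the minus signs enter and match the diagonal $1$'s.

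The main obstacle will be the simultaneous bookkeeping of the three clauses of Definition \ref{def_crossB} and the signs: one needs an arc model in which all three kinds of crossings become ``one arc passing over another'', and in which the number of such nestings at each site equals the number of superfluous $1$'s produced in the corresponding row of the tableau. The safest route is to first write down the explicit tridiagonal solution of the Ansatz, combining the ``inversion'' matrices that govern the negative entries (cf.\ Remark \ref{remcroinv}) with the ``crossing'' matrices of the preceding subsection, so that $\bra W(D+E)^n\ket V$ becomes a generating function for weighted Motzkin-type paths (``type B Charlier diagrams''); then one invokes and adapts the path-to-permutation correspondences of \cite{Cor07,SS96,KaZe06} to read those paths as signed permutations graded by crossings and minus signs. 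Composing ``signed permutations $\leftrightarrow$ weighted paths $\leftrightarrow$ type B tableaux'' then gives the explicit bijection, and checking that it restricts, on the all-positive (resp.\ all-negative) part, to the type A crossing bijection of \cite{StWi07} (resp.\ to the inversion bijection of Proposition \ref{inv-tableaux}) via Remark \ref{remcroinv} serves as a consistency check on the weights.
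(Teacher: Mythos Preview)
Your outline is viable and lands close to one of the two arguments the paper actually gives. The paper's primary proof is not via the Ansatz recursion at all: it simply invokes the explicit type~B bijection of \cite[Section 10]{LaWi08} (itself an adaptation of \cite{StWi07}) and then reruns the statistic-tracking argument of \cite[Section 3]{StWi07} to see that superfluous $1$'s go to crossings and diagonal $1$'s go to minus signs. Your plan instead establishes the equality $B_n(q,r)=\widetilde B_n(q,r)$ through the Matrix Ansatz and then extracts a bijection by composing ``signed permutations $\leftrightarrow$ weighted Motzkin paths $\leftrightarrow$ type B tableaux''. The paper does sketch exactly this as a second proof: it writes down the tridiagonal solution (with $\lambda_i=[i+1]_q^2(1+rq^i)(1+rq^{i+1})$ and $b_i=(1+rq^i)([i]_q+[i+1]_q)$) and asserts that the resulting Motzkin generating function equals the crossing generating function for $B_n$ by an adaptation of \cite{Cor07,SS96}, deferring the details to \cite{CJKW10}.

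Two cautions about your write-up. First, the part where you try to verify the four Ansatz relations directly on an arc model of signed permutations is the delicate step, and your description of $D\ket V=rE\ket V$ (``closing the bottom arc at the diagonal forces a sign choice'') and of the local $DE\to qED+D+E$ move does not yet pin down a well-defined shape decomposition of $B_n$; in practice it is cleaner to do what the paper does and go through the Motzkin paths, where the weights $1+rq^i$ visibly split each level into a signed and an unsigned contribution. Second, equality of the bivariate generating functions already gives the \emph{existence} of a bijection preserving $(j,\ell)$; if you want a canonical one from the recursion, you must check that the recursive peeling on both sides is compatible step by step, which is essentially rebuilding the \cite{LaWi08} map. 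Either route is fine, but be explicit about which one you are completing.
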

\begin{proof}
This result is analogous to a result proved for usual permutations
and type A permutation tableaux \cite{Cor07, StWi07}, and we can prove
it using an argument analogous to that given in \cite{StWi07}.
Indeed, we use the bijection from \cite[Section 10]{LaWi08}, 
which in turn was an adaptation of the bijection from \cite{StWi07}.
%An active row is a row whose label is positive or
%which contains at least a one. An active column is a column
%that does not contain a one on the main diagonal.
%Starting from a 0-1 filling of a diagram,
%for each $i$, if $i$ is the label of an active 
%row then if the row is empty then set $\pi(i)=i$. Otherwise
%start from the leftmost one in the row
%and then zig-zag south and east, turning each time the path meets a one.
%This path will terminate at some label $j>i$ and set $\pi(|i|)=sign(i)j$.
%Otherwise if $i$ is the label of an active column, 
%start from the topmost one in the column
%and zig-zag east and south, 
%turning at each opportunity. This path will terminate at some label $j<i$
%and set $\pi(i)=j$.\\
%For example, the active rows of the permutation tableau of type B
%on  Figure \ref{exaAB}
%are $-7$, $-4$, $1$, $2$, $3$, $5$, $8$ and the active column is $6$. The bijection 
%gives the signed permutation  $\left(
%2,3,7,-5,6,1,-4,8
%\right)
%$.\\
We then apply the same arguments that were used in Section 3 of
\cite{StWi07} to show how the statistics translate under this
bijection. 
%Given a superfluous
%one in row $i$ and column $j$, 
%zig-zag North and West  as far as possible turning each time the path meets a one. 
%If this path ends
%in row (resp. column) $k$ then this one corresponds to the crossing 
%$(|k|,|i|)$ 
%(resp. $(j,k)$).
%Continuing with our example, the one in cell in row 3 and column 6 corresponds
%to the crossing $(7,3)$.

One can also prove the preceding result with a solution of the Matrix Ansatz.
Indeed
$$
D_{i,i}=D_{i,i+1}=[i+1]_q,\qquad D_{i,j}=0\ {\rm otherwise};
$$
$$
E_{i,i-1}=[i]_q(1+rq^i),\qquad E_{i,i}=[i]_q(1+rq^i)+rq^i[i+1]_q, \qquad
\qquad E_{i,i+1}=rq^i[i+1]_q, \qquad E_{i,j}=0\ {\rm otherwise},
$$
$$\bar{W}=(1,0,\ldots),\qquad \ket{V}=(1,0,\ldots )^T,
$$
is a solution.
Therefore $\bra{W}(D+E)^n\ket{V}$ is the generating function of weighted 
Motzkin paths
of length $n$ with weights $\lambda_i=[i+1]_q^2(1+rq^i)(1+rq^{i+1})$ 
and $b_i=(1+rq^i)([i]_q+[i+1]_q)$.
On can adapt the proofs in  \cite{Cor07,SS96} to show that this is indeed 
the generating function of signed permutation in $B_n$ according to 
crossings \cite{CJKW10}. 
\end{proof}

From the weights on the Motzkin paths, we see that the generating function for signed 
permutations of size $n$ where $q$ counts the number of crossings, is the $n$th moment
of the orthogonal sequence $P_n(x;r\mid q)$ or simply $P_n(x)$ defined by the recurrence:
\begin{equation}
  xP_n(x) = P_{n+1}(x) + \big([n+1]_q + [n]_q\big)(1+rq^n) P_n(x)
 + (1+rq^n)(1+rq^{n-1})[n]_q^2 P_{n-1}(x),
\end{equation}
%It is not obvious {\it a priori} 
%\marginpar{Matthieu says this can be seen as continuous dual $q$-Hahn}
%how to relate this relation to a classical orthogonal
%sequence, but we certainly 
with $P_{-1}(x)=0$ and $P_0(x)=1$.
We have the following two special cases for these orthogonal polynomials (which are the counterpart
of combinatorial properties given in Remark \ref{remcroinv}).
\begin{itemize}
\item When $r=0$, we recover the recurrence relation for the Al-Salam-Chihara $q$-Laguerre
      polynomials $L_n^*(x)$,
\item When we keep only the terms of maximal degree in $r$, we recover the recurrence relation
      for the little $q$-Jacobi Laguerre polynomials $L_n(x)$.
\end{itemize}
These $P_n(x)$ can be linked with some classical polynomials called continuous dual 
$q$-Hahn polynomials in \cite{KoSw98}, denoted by $R_n(x;a,b,c\mid q)$ or simply $R_n(x)$.
Their recurrence relation is
\begin{equation} \label{rec_cdqh}
  2xR_n(x) = R_{n+1}(x) + \left(a+\tfrac 1a - (A_n + C_n)\right)R_n(x) + A_{n-1}C_nR_{n-1}(x),
\end{equation}
where
\begin{equation}
 A_n =  \tfrac 1a (1-abq^n)(1-acq^n) \qquad \hbox{ and } \qquad C_n = a (1-q^n)(1-bcq^{n-1}).
\end{equation}
In the case where $a=1$, $b=-r$ and $c=q$, one has
\begin{equation}
 A_n +C_n =  -(1-q)\big([n]_q+[n+1]_q)(1+rq^n) \quad \hbox{ and } 
 \quad A_{n-1}C_n =  (1-q)^2 (1+rq^{n})(1+rq^{n-1}) [n]_q^2 .
\end{equation}
Hence,
\begin{equation}
   P_n(x;r\mid q) = \tfrac{1}{(q-1)^n} R_n\left( x\tfrac{q-1}{2} + 1 ;1,-r,q\mid q\right),
\end{equation}
and we can see $P_n(x)$ as a special case of these continuous dual $q$-Hahn polynomials.

%By comparing the recurrence relation, it can be checked that these polynomials $P_n(x)$ we have 
%just defined are a special case of continuous dual $q$-Hahn polynomials
%

\section{Genocchi numbers}
\label{sec:gen}

Dumont and Foata introduced in \cite{DuFo76} a three-parameter generalization 
$\{f_n(a,b,c)\}_{n\geq1}$ 
of the Genocchi numbers,
defined by $f_1(a,b,c)=1$ and
\begin{equation} \label{df_rec}
  f_{n+1}(a,b,c) = (a+b)(a+c) f_{n}(a+1,b,c) - a^2 f_{n}(a,b,c).
\end{equation}

When the parameters are set to $1$, $f_n(1,1,1)$ 
is the Genocchi number $G_{2n+2}$ defined by
\begin{equation}
\sum_{n=1}^{\infty} G_{2n} \frac{ x^{2n} }{ (2n)! } = x \cdot \hbox{tan}\left(\frac x2\right).
\end{equation}

It is straightforward to see that 
$f_n$ has non-negative coefficients, but it is more difficult to 
prove that $f_n$ is symmetric in $a$, $b$, and $c$ \cite{DuFo76}.
And while several 
combinatorial interpretations of $f_n(a,b,c)$ have been given 
\cite{Dum74,DuFo76,Han96,Vie81}, none of them readily exhibit 
the symmetry in $a, b, c$. 

The goal of this section is to give a new combinatorial interpretation
of $f_n(a,b,c)$ in terms of {\it alternative tableaux} \cite{Vie08}, and to use this 
combinatorial interpretation to give a simple proof of the symmetry
in $a, b$ and $c$. This is done by using the link with moments 
of continuous dual Hahn polynomials, and the Matrix Ansatz for alternative
tableaux.

%\subsection{Continuous dual Hahn polynomials}

The continuous dual Hahn polynomials $S_n(x)$ are defined by the recurrence :
\begin{equation} \label{rec_cdh}
  xS_n(x) = S_{n+1}(x) + (A_n+C_n-a^2)S_n(x) 
     + A_{n-1}C_nS_{n-1}(x),
\end{equation}
where:
\begin{equation}
 A_n = (n+a+b)(n+a+c) \qquad\hbox{ and }\qquad C_n= n(n+b+c-1),
\end{equation}
and $S_{-1}(x)=0$, $S_0(x)=1$. (Up to a sign they are the same as the ones defined in 
\cite{KoSw98}.)
%They are interesting on the combinatorial point of view because their $n$th moment
%is the Dumont-Foata polynomials $f_{n+1}(a,b,c)$.
Zeng \cite[Corollaire 2]{Zen96} has given
a continued fraction for the generating function $\sum_{n\geq0} f_n(a,b,c)x^n$. 
This was conjectured by Dumont and also proved by Randrianarivony, see details in \cite{Zen96}.
From this 
continued fraction and the recurrence (\ref{rec_cdh}) we see that $f_n(a,b,c)$ is the $n$th moment 
$\mu_n^s$ of the orthogonal sequence $\{S_k(x)\}_{k\geq0}$.

\medskip

Thus we have $f_n(a,b,c)=\bra{W}M^n\ket{V}$ where the matrix $M$ has coefficients:
\begin{equation}
M_{i,i} = A_i + C_i - a^2, \qquad M_{i+1,i} = A_i, \qquad M_{i-1,i} = C_i.
\end{equation}

%It is known \marginpar{give a reference!}
Using the bijection of Corteel and Nadeau \cite{CoNa09}, it can be checked that permutation
tableaux of staircase shape are linked to {\it Dumont permutation of the first kind}
\cite{Dum74}, which were introduced to give a combinatorial interpretation of Genocchi numbers.
These permutations are the $\sigma\in\mathfrak{S}_{2n}$ such that $\sigma(i)>\sigma(i+1)$
if and only if $\sigma(i)$ is even for any $1\leq i \leq 2n$ (with the convention that 
$\sigma(2n+1)=2n+1$). Via the bijection, the shape of the tableau is obtained by examining the
values of the descent in the permutation so that the result follow easily.
Thus, Genocchi numbers counts the permutation tableaux of staircase shape. 
Alternatively, we could use the bijection of Steingr\'imsson and Williams \cite{StWi07}, which links 
staircase permutation tableaux to {\it Dumont permutation of the second kind} \cite{Dum74};
indeed, these permutations are the $\sigma\in\mathfrak{S}_{2n}$ such that $\sigma(i)<i$ if and only
if $i$ is even, and via the bijection the shape of the tableau is obtained by examining the 
weak exceedances of the permutations.

So it is natural to compare this matrix $M$ with the product $DE$ of the matrices $D$ and 
$E$ of the PASEP Matrix Ansatz defined in \eqref{useafter}. The limit when $q=1$ of these matrices are 
well-defined, and a straightforward computation show that $DE+(c-1)(D+E)$ 
is equal to $M$, under the condition that $a=\beta^{-1}$ and $b=\alpha^{-1}$. So, knowing that 
$f_n(a,b,c)$ is symmetric, we have:
\[
f_n(a,b,c)=\bra{W}M^n\ket{V}=\bra{W}(DE+(c-1)(D+E))^n\ket{V} = \bra{W}(ED+cD+cE))^n\ket{V}.
\]
The last equality is obtained by using the fact that $DE=ED+D+E$.
We can derive a new combinatorial interpretation of $f_n(a,b,c)$ in terms of permutation tableaux.

\smallskip

We will also use {\it alternative tableaux}, which are slightly different objects.
As remarked in \cite{CoNa09}, all the entries of a permutation tableau
can be recovered if one knows the position of the topmost 1's
and the rightmost restricted 0's. Viennot \cite{Vie08,Na09} took this
a step further and defined {\it alternative tableaux}, which are
partial fillings of Young diagrams with $\alpha$s and $\beta$s
such that any cell to the left of a $\beta$ (resp. above an $\alpha$)
is empty. There is a direct bijection from permutation tableaux
of length $n+1$ to alternative tableaux of length $n$
(essentially one replaces topmost $1$'s with $\alpha$s and 
rightmost restricted $0$'s with $\beta$s and makes all other boxes
empty and deletes the first row). 
Alternative tableaux are interesting in this context, because
they are more symmetric than permutation tableaux, and consequently more
adequate to explain the symmetry of the polynomials $f_n(a,b,c)$.

\begin{theorem} The polynomial $(\alpha\beta\gamma)^nf_n(\alpha^{-1},\beta^{-1},\gamma^{-1})$ counts 
the staircase alternative tableaux of length $2n$, where the parameters $\alpha$, 
$\beta$, $\gamma$ follow these statistics:
\begin{itemize}
\item the number of cells containing a $\alpha$,
\item the number of cells containing a $\beta$, 
\item the number of corners which does not contains a $\alpha$ or a $\beta$.
\end{itemize}
Equivalently, $(\alpha\beta\gamma)^nf_n(\alpha^{-1},\beta^{-1},\gamma^{-1})$ counts the 
staircase permutation tableaux of length $2n+2$, where the parameters $\alpha$, $\beta$, $\gamma$ 
follow these statistics:
\begin{itemize}
\item the number of $0$'s in the first row,
\item the number of restricted rows,
\item the number of corner which contains a superfluous $1$.
\end{itemize}
\end{theorem}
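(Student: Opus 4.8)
The plan is to feed the already-established identity $f_n(a,b,c)=\bra{W}(ED+cD+cE)^n\ket{V}$ into the Matrix Ansatz for alternative tableaux and then run a short inclusion--exclusion over the corners of a staircase. First I would rescale. In that identity, $\bra{W}=(1,0,0,\dots)$, $\ket{V}=(1,0,0,\dots)^T$ and $D,E$ satisfy $DE=ED+D+E$, $\braket{W}{V}=1$ together with $\bra{W}E=b\bra{W}$ and $D\ket{V}=a\ket{V}$. Putting $a=\alpha^{-1}$, $b=\beta^{-1}$, $c=\gamma^{-1}$ and $\hat D=\alpha D$, $\hat E=\beta E$ turns these into $\hat D\hat E=\hat E\hat D+\beta\hat D+\alpha\hat E$, $\bra{W}\hat E=\bra{W}$, $\hat D\ket{V}=\ket{V}$, $\braket{W}{V}=1$, and
\[
(\alpha\beta\gamma)^n f_n(\alpha^{-1},\beta^{-1},\gamma^{-1}) = \bra{W}\bigl(\gamma\hat E\hat D+\beta\hat D+\alpha\hat E\bigr)^n\ket{V}.
\]
The gain is that $(\hat D,\hat E,\bra{W},\ket{V})$ is a form of the Matrix Ansatz for alternative tableaux: for any word $w$ in $\hat D,\hat E$, the product $\bra{W}w\ket{V}$ equals $\sum_T\alpha^{a(T)}\beta^{b(T)}$, the sum over alternative tableaux $T$ whose shape (read off the southeast border) is $w$, with $a(T),b(T)$ counting the cells containing $\alpha$, $\beta$. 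Here the bulk relation is exactly the corner decomposition of such a tableau (a corner cell is empty, or contains a $\beta$ and its row is deleted, or contains an $\alpha$ and its column is deleted) and the boundary relations delete empty boundary rows and columns.

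Next, using $\beta\hat D+\alpha\hat E=\hat D\hat E-\hat E\hat D$, rewrite the bracket as $\hat D\hat E+(\gamma-1)\hat E\hat D$ and expand its $n$-th power as a noncommutative polynomial: each of the $n$ factors contributes a length-two block, $DE$ with weight $1$ or $ED$ with weight $\gamma-1$, so
\[
(\alpha\beta\gamma)^n f_n(\alpha^{-1},\beta^{-1},\gamma^{-1}) = \sum_{\sigma\in\{DE,\,ED\}^n}(\gamma-1)^{e(\sigma)}\,\bra{W}w_\sigma\ket{V},
\]
where $w_\sigma$ is the length-$2n$ concatenation of the blocks of $\sigma$ and $e(\sigma)$ is the number of $ED$-blocks. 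The all-$DE$ term has $w_\sigma=(DE)^n$, which is the shape of a staircase alternative tableau of length $2n$; the others are corrections. The key lemma is a weight-preserving bijection between the pairs $(\sigma,T')$, $T'$ an alternative tableau of shape $w_\sigma$, and the pairs $(T,S)$, $T$ a staircase alternative tableau of length $2n$ and $S\subseteq\{\text{empty corners of }T\}$: from $(T,S)$ one deletes the cells of $S$ (being empty corners, they carry no $\alpha$/$\beta$ and their removal preserves all the conditions) and flips the corresponding $DE$-blocks of $(DE)^n$ to $ED$, so $e(\sigma)=|S|$ and $a(T'),b(T')$ equal $a(T),b(T)$. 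Given this, the right-hand side becomes $\sum_{(T,S)}(\gamma-1)^{|S|}\alpha^{a(T)}\beta^{b(T)}=\sum_T\alpha^{a(T)}\beta^{b(T)}\prod_{c\ \text{corner}}\bigl(1+(\gamma-1)[c\text{ empty}]\bigr)=\sum_T\alpha^{a(T)}\beta^{b(T)}\gamma^{\#\{\text{empty corners of }T\}}$, which is the asserted count of staircase alternative tableaux.

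The permutation-tableau formulation then follows by transporting these statistics along the bijection (topmost $1$'s $\leftrightarrow\alpha$'s, rightmost restricted $0$'s $\leftrightarrow\beta$'s, delete the first row) between staircase permutation tableaux of length $2n+2$ and staircase alternative tableaux of length $2n$: cells with $\alpha$ correspond to the $0$'s of the first row, cells with $\beta$ to the restricted rows, and empty corners to the corners containing a superfluous $1$ --- here one uses that a corner box of a permutation tableau holding a $0$ is forced by the column condition to be a restricted $0$, so that among corner boxes only those with a superfluous $1$ become empty after the translation. I expect the real obstacle to be the bijection above: one must check that deleting an empty corner cell of a staircase tableau never violates an alternative-tableau condition nor changes the $\alpha$/$\beta$-status of any other cell, that flipping several blocks simultaneously does not interfere, and that the map hits every alternative tableau of every shape $w_\sigma$; the bookkeeping for the statistic transfer on the permutation side is routine but also needs to be written out.
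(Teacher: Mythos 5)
Your argument is correct in outline and reaches the theorem by a route that differs in execution from the paper's, although both start from the identity $f_n(a,b,c)=\bra{W}(ED+cD+cE)^n\ket{V}$ and both ultimately rest on the same corner decomposition. The paper never leaves the permutation-tableau formalism: it recalls that $\bra{W}(DE)^n\ket{V}$ counts staircase permutation tableaux with $\alpha^{-1},\beta^{-1}$ recording $1$'s in the first row and unrestricted rows, observes that replacing $DE$ by $ED+\gamma^{-1}D+\gamma^{-1}E$ in the tableau recursion makes $\gamma^{-1}$ record the corners holding a topmost $1$ or a restricted $0$, and then passes to the complementary statistics via the prefactor $(\alpha\beta\gamma)^n$. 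You instead rescale to an alternative-tableau Ansatz, rewrite the operator as $\hat D\hat E+(\gamma-1)\hat E\hat D$, and run an explicit sieve: the correspondence $(T,S)\leftrightarrow(\sigma,T')$ obtained by deleting a subset $S$ of empty corners of the staircase is indeed a weight-preserving bijection (an empty corner cell is last in its row and in its column, so its removal or reinsertion can never violate the alternative-tableau conditions), and summing $(\gamma-1)^{|S|}$ over $S$ produces $\gamma^{\#\{\text{empty corners}\}}$ directly. This is more explicit and checkable than the paper's one-line appeal to ``the recurrence relation for permutation tableaux,'' at the cost of being longer; your transfer of statistics to permutation tableaux, including the observation that a $0$ in a corner is forced to be restricted, is exactly the point that needs checking there.

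One slip should be repaired. With your normalization $\hat D=\alpha D$, $\hat E=\beta E$, $\bra{W}\hat E=\bra{W}$, $\hat D\ket{V}=\ket{V}$, the bulk relation is $\hat D\hat E=\hat E\hat D+\beta\hat D+\alpha\hat E$, but the corner decomposition of alternative tableaux weighted by $\alpha^{a(T)}\beta^{b(T)}$ satisfies $A(XDEY)=A(XEDY)+\alpha A(XDY)+\beta A(XEY)$: an $\alpha$-corner forces its whole column empty, so deleting that column removes the $E$ step (shape $XDY$) with weight $\alpha$, while a $\beta$-corner removes the $D$ step. Hence for a general word your matrices give the alternative-tableau generating function with $\alpha$ and $\beta$ exchanged; for instance $\bra{W}\hat D\hat D\hat E\ket{V}=1+2\alpha+2\beta+\alpha\beta+\alpha^2$, whereas the tableaux of that shape give $1+2\alpha+2\beta+\alpha\beta+\beta^2$. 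So your key lemma is false as stated for arbitrary shapes. The fix is harmless: either attach the parameters the other way (take $\hat D=\beta D$, $\hat E=\alpha E$, i.e.\ the paper's identification $a=\beta^{-1}$, $b=\alpha^{-1}$), or keep your normalization and note at the end that the staircase shape is self-transpose, so exchanging the $\alpha$- and $\beta$-statistics does not change the staircase generating function. With either repair the rest of your argument goes through unchanged.
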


\begin{proof}
When $\gamma=1$, we can use the results from \cite{CoWi07a} which have been recalled in the 
introduction of this article, and it follows
%\marginpar{give reference! or say more clearly ``it follows from" or 
%``is proved in" such-and-such-a-reference}
that $\bra{W}(DE)^n\ket{V}$ counts the staircase permutation tableaux of
length $2n+2$, where the parameters $\alpha^{-1}$, $\beta^{-1}$ counts the number of 1's in the 
first row and the number of unrestricted row.
When we replace $DE$ with $ED+\gamma^{-1}D+\gamma^{-1}E$, in the recurrence relation for permutation 
tableaux we see that $\gamma^{-1}$ will count the corners containing a restricted 0 or a topmost 1.
So in $(\alpha\beta\gamma)^nf_n(\alpha^{-1},\beta^{-1},\gamma^{-1})$, the parameters $\alpha$,
$\beta$, and $\gamma$ counts the complementary statistics which are given in the theorem.
\end{proof}

\medskip

In this combinatorial interpretation, the symmetry in $\alpha$ and $\beta$ is apparent, because 
we can transpose the tableaux to exchange the two parameters. The symmetry in $\gamma$ and $\beta$ 
is obvious from the recurrence relation \eqref{df_rec}, so this implies the full symmetry of the three
parameters. The symmetry in $\gamma$ and $\beta$ 
can also be proved by an explicit involution \cite{Jos11}. 
%Moreover there are seemingly 
%similarities between staircase permutation tableaux and {\it triangles color\'ees surjectifs} 
%defined by Viennot \cite{Vie81} and it would be nice to have a bijection linking these 
%two objects.

We can ask if there is a direct bijection between staircase alternative tableaux and other
known combinatorial interpretations of $f_n(a,b,c)$. In particular, Viennot \cite{Vie81}
gives a combinatorial interpretation which has also an apparent symmetry exchanging two 
parameters, which can be defined as follows. We consider pairs $(f,g)$ of maps from $[n]$
to $[n]$ such that $f(i)\geq i$ and $g(i)\geq i$ for any $i$, and such that for any 
$1\leq j\leq n$ there is at least an $i$ such that $f(i)=j$ or $g(i)=j$.
The three statistics are:
\begin{itemize}
\item $u(f,g)$ is the number of $i$ such that $f(i)=i$ ,
\item $v(f,g)$ is the number of $i$ such that $g(i)=i$ ,
\item $w(f,g)$ is the number of $i$ such that $f(i)=n$ 
plus the the number of $j$ such that $g(j)=n$.
\end{itemize}
The we have $f_n(a,b,c)=\sum_{(f,g)}a^{u(f,g)-1}b^{v(f,g)-1}c^{w(f,g)-2}$.
There is an obvious symmetry in $a$ and $b$ but it seems to be different 
from the symmetry which is apparent on
the permutation tableaux.

\medskip

If we examine the general case where $q\neq 1$, it appears that $\bra{W}(qED+cD+cE)^n\ket{V}$ is not
symmetric in $\alpha$ and $\gamma$. To obtain a refinement of $f_n(a,b,c)$ or 
$(\alpha\beta\gamma)^nf_n(\alpha^{-1},\beta^{-1},\gamma^{-1})$ it might be interesting to consider
the continuous dual $q$-Hahn polynomials $R_n(x;a,b,c\mid q)$, defined in \eqref{rec_cdqh}.
Let $g_n(a,b,c,q)$ be the $n$th moment of the polynomials $R_n(\frac x2-1;a,b,c\mid q)$. Let 
\[
  \tilde a = (1-q)a-1, \qquad \tilde b = (1-q)b-1, \qquad \hbox{and} \quad \tilde c = (1-q)c-1.
\]
Then we have that $g_n(\tilde a,\tilde b,\tilde c,q)(1-q)^{-2n}$ is a polynomial, symmetric 
in $a$, $b$ and $c$, which specializes to $f_n(a,b,c)$ when $q=1$. This just confirms that 
these $q$-Hahn polynomials are a $q$-analog of the Hahn polynomials. Unfortunately the 
polynomial $g_n(\tilde a,\tilde b,\tilde c,q)(1-q)^{-2n}$ contains negative terms. However 
we have the following result when one parameter is set to one, for example $a=1$.

\begin{theorem} The moment $g_n(-q,\tilde b,\tilde c,q)(1-q)^{-2n}$ is the generating function 
for staircase permutation tableaux of length $2n+2$, where $q$ counts the superfluous $1$'s, $b$ 
counts the number of $1$'s in the first row except one of them, and $c$ counts the number of 
unrestricted rows except the first row.
\end{theorem}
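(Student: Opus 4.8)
The plan is to prove the theorem by the same two-step route used for the $\gamma=1$ case treated just above: first identify $g_n(-q,\tilde b,\tilde c,q)(1-q)^{-2n}$ with a matrix product $\bra W(qED+cD+cE)^n\ket V$ built from the PASEP Matrix Ansatz, and then read off its combinatorial meaning from the recursive decomposition of permutation tableaux.

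For the first step I would compute the three-term recurrence of the monic orthogonal sequence whose $n$th moment is $g_n(-q,\tilde b,\tilde c,q)$. Starting from \eqref{rec_cdqh} with $(a,b,c)$ replaced by $(\tilde a,\tilde b,\tilde c)=(-q,\tilde b,\tilde c)$ (that is, the pre-tilde parameter $a$ set to $1$), applying the affine change of variable $x\mapsto\tfrac x2-1$ that enters the definition of $g_n$, and then rescaling the variable as in Remark \ref{shift} so that the $n$th moment acquires exactly the factor $(1-q)^{-2n}$, one obtains a recurrence whose coefficients $b_i,\lambda_i$, after using the identities $1+\tilde b q^m=(1-q)([m]_q+bq^m)$, $1+\tilde c q^m=(1-q)([m]_q+cq^m)$, $1-q^m=(1-q)[m]_q$ and $2-q-\tfrac1q=-\tfrac{(1-q)^2}{q}$, are polynomials in $b,c,q$. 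By Theorem \ref{moment-motzkin}, together with the remark that the moments of a tridiagonal matrix only see its diagonal and the products of its off-diagonal entries, it then suffices to present a tridiagonal matrix with this data. I would take the PASEP matrices $D,E$ of \eqref{useafter} with the boundary parameters specialized so that $\alpha^{-1}=b$, $\beta=1$ (matching the Hahn parameter $a=1$) and the extra coefficient $c$ in the role of $\gamma^{-1}$; since $D$ is upper bidiagonal and $E$ lower bidiagonal, $DE+(c-1)(D+E)$ is tridiagonal, and comparing its diagonal and off-diagonal products with $b_i,\lambda_i$ is the same elementary computation as the one establishing $DE+(c-1)(D+E)=M$ in the $q=1$ case above. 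Together with the Ansatz relation $DE=qED+D+E$ this yields
\[
g_n(-q,\tilde b,\tilde c,q)(1-q)^{-2n}=\bra W\bigl(DE+(c-1)(D+E)\bigr)^n\ket V=\bra W\bigl(qED+cD+cE\bigr)^n\ket V .
\]

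For the second step I would expand $(qED+cD+cE)^n$ into a sum of words $X$ in $D$ and $E$ with monomials in $q$ and $c$ as coefficients, and evaluate each word via $\bra W X\ket V=\Tab(X)$, now with $\alpha^{-1}=b$ and $\beta^{-1}=1$. The relations \eqref{firstrelation} and \eqref{secondthirdrelation} are precisely the corner-by-corner reduction of a staircase permutation tableau of length $2n+2$: peeling a corner as a block $DE$ with weight $q$ records a superfluous $1$, while peeling it as a single $D$ or a single $E$, each with weight $c$, records a corner holding a topmost $1$ or a restricted $0$; on the staircase shape these latter corners are in bijection with the unrestricted rows other than the first, and the weight $\alpha^{-1}=b$ records the $1$'s in the first row -- the one that is always present and the always-unrestricted first row accounting for the phrases ``except one'' and ``except the first row''. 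This is the very bookkeeping carried out in the proof of the preceding theorem, now kept at general $q$, and it identifies $\bra W(qED+cD+cE)^n\ket V$ with the asserted generating function.

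The main obstacle is the recurrence-matching inside the first step: carrying the powers of $1-q$ through the products $(1-\tilde a\tilde b q^n)(1-\tilde a\tilde c q^n)$ and $(1-q^n)(1-\tilde b\tilde c q^{n-1})$, combining this with the shift $x\mapsto\tfrac x2-1$ and the rescaling of Remark \ref{shift}, and checking that the outcome agrees exactly with the diagonal and off-diagonal products of $DE+(c-1)(D+E)$ -- each side is elementary, but reconciling the two normalizations is where the real computation lies. A lesser subtlety is to verify, specifically on the staircase shape, that the statistic tracked by $c$ in the decomposition (corners holding a topmost $1$ or a restricted $0$) coincides with ``the number of unrestricted rows except the first'', so that the theorem comes out in its stated form rather than in a merely equivalent one.
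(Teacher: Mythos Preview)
Your approach has a genuine gap, and in fact the paper's own proof goes by a different (and simpler) route.

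The paper does not set $\beta=1$ and carry $c$ as an external coefficient in $qED+cD+cE$. Instead it takes the PASEP matrices with \emph{both} $\alpha^{-1}=b$ and $\beta^{-1}=c$, so that the generating function for staircase permutation tableaux is, directly from the $\Tab$ formula of the introduction, $\bra{W}(DE)^n\ket{V}$; no corner-peeling argument is needed for the combinatorial side. One then computes the tridiagonal matrix $M=DE$ with $\tilde\alpha=\tilde b$, $\tilde\beta=\tilde c$ and compares its entries to the continuous dual $q$-Hahn recurrence \eqref{rec_cdqh} (after the shift $x\mapsto\tfrac x2-1$), which forces the first Hahn parameter to be $-q$.

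Your route breaks at the very first step. The identity $DE+(c-1)(D+E)=M$ that you borrow from the $q=1$ discussion is specific to $q=1$: for general $q$, $\bra{W}(qED+cD+cE)^n\ket{V}$ with $\beta=1$ is \emph{not} the $q$-Hahn moment $g_n(-q,\tilde b,\tilde c,q)(1-q)^{-2n}$. (The paper even warns about this, noting just before the theorem that for $q\neq1$ the quantity $\bra{W}(qED+cD+cE)^n\ket{V}$ is not symmetric in $\alpha$ and $\gamma$.) A quick check at $n=1$ already shows the discrepancy: with $\beta=1$ one gets $\bra{W}(qED+cD+cE)\ket{V}=c+bq+bc$, whereas with $\beta^{-1}=c$ one gets $\bra{W}DE\ket{V}=qbc+b+c$; these agree only at $q=1$.

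This same computation also shows that your ``lesser subtlety'' is in fact false: the statistic tracked by $c$ in your decomposition (corners holding a topmost $1$ or a restricted $0$) is \emph{not} equidistributed with ``number of unrestricted rows except the first'' once $q$ is kept general, since equidistribution would force the two $n=1$ answers above to coincide. So the corner-peeling interpretation you propose for the second step gives a different refinement, not the one in the statement. The fix is exactly the paper's: put $c$ inside the Ansatz as $\beta^{-1}$, use $(DE)^n$, and match $M=DE$ against \eqref{rec_cdqh}.
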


\begin{proof}
This generating function of permutation tableaux is $\bra{W}(DE)^n\ket{V}$ where $D$ and $E$ 
are the solution of the PASEP Matrix Ansatz. By writing the product explicitely we can check 
that $M=DE$ has coefficients:
\[
   (1-q)M_{i,i} = (1-\tilde\alpha q^i)(1- \tilde \beta q^i) + (1-\tilde\alpha\beta q^{i})(1-q^{i+1}),
\]
\[
   (1-q)M_{i+1,i} = (1-q^{i+1})(1-\tilde\beta q^{i+1} ),
\]
\[
   \qquad (1-q)M_{i,i+1} = (1-\tilde\alpha\tilde \beta q^i)(1- \tilde \alpha q^{i+1}). 
\]
Hence $\bra{W}(DE)^n\ket{V}$ is the $n$th moment of an orthogonal sequence where the three-term
relation can be derived from the coefficients of the matrix $M$, and by comparing it with the 
coefficients of the recurrence (\ref{rec_cdqh}) we can derive the result.
\end{proof}

%\section{Conclusion}

%We show in this paper that moments of classical orthogonal
%polynomials can be linked to various combinatorial objects via 
%a generalized Matrix Ansatz.  
%This works easily for the $q$-Hermite, $q$-Charlier,
%$q$-Laguerre and Al-Salam-Chihara polynomials.  See 
%\cite{Jos09a, Jos09b} for more connections between tableaux, Motzkin
%paths, and moments.
%We also show that the permutation tableaux of type B provide a combinatorial
%formula for the moments of 
%some $q$-Laguerre
%type polynomials. 
%However, explicit formulas for the enumeration of type B permutation
%tableaux are still lacking.  More specifically, while there
%is a formula enumerating the type B permutation 
%tableaux of a fixed shape \cite{NTW08},
%we do not yet have a type B analogue of the $q$-Eulerian polynomials
%defined in \cite{Wil05}, nor do we have a simple explicit weight-generating function
%for all type B tableaux of length $n$.
%It would be nice to have such explicit formulas, in order to compute
%the corresponding moments of orthogonal polynomials.

\end{document}